\newenvironment{proof}{{\bf Proof}:\ }%
{~\ \hfill $\Box$\vspace{0,5cm}}
\newtheorem{prop}{Property}[section]
\newtheorem{theorem}{Theorem}[section]
\newtheorem{rmk}{Remark}[section]
\newtheorem{lemma}[theorem]{Lemma}
\newtheorem{coro}[theorem]{Corollary}
\numberwithin{equation}{section}
\begin{document}

\title{On Minimum Dominating Sets in cubic and $(claw,H)-free$ graphs}
\author{
Valentin Bouquet\footnotemark[1]
\and
Fran\c cois Delbot\footnotemark[2]
\and
Christophe\ Picouleau \footnotemark[1]
\and
St\'ephane Rovedakis\footnotemark[1]
}

\date{\today}

\footnotetext[1]{ \noindent
Conservatoire National des Arts et M\'etiers, CEDRIC laboratory, Paris (France). Email: {\tt
valentin.bouquet@lecnam.net,christophe.picouleau@cnam.fr,stephane.rovedakis@cnam.fr}
}

\footnotetext[2]{ \noindent
Sorbonne Universit\'e, Laboratoire d'Informatique de Paris 6 (LIP6), Paris (France). Email: {\tt
francois.delbot@lip6.fr
}}

\graphicspath{{.}{graphics/}}

\maketitle
\begin{abstract}
Given a graph $G=(V,E)$, $S\subseteq V$ is a dominating set if  every $v\in V\setminus S$  is adjacent to an element of $S$. The Minimum Dominating Set problem asks for a dominating set with minimum cardinality. It is well known that its decision version is $NP$-complete even when $G$ is a claw-free graph. We give a complexity dichotomy for the Minimum Dominating Set problem for the class of $(claw, H)$-free graphs when $H$ has at most six vertices. In an intermediate step we show that the Minimum Dominating Set problem is $NP$-complete for cubic graphs.

\vspace{0.2cm}
\noindent{\textbf{Keywords}\/}: Minimum Dominating Set, cubic graphs, claw-free graphs, NP-complete.
 \end{abstract}

\parindent=0cm
\section{Introduction}
We will only be concerned with simple undirected graphs. The reader is referred to \cite{Bondy} and  \cite{GJ} for, respectively,  the definitions and notations on graph theory  and on computational complexity.

Given a graph $G=(V,E)$, a set $S\subseteq V$ is called a {\it dominating set} if every vertex $v\in V$ is either an element of $S$ or is adjacent to an element of $S$. When $S$ is a dominating set of $G$ we say that $S$ {\it dominates} $G$. The minimum cardinality of a dominating set in $G$ is denoted by $\gamma(G)$. A dominating set $S\in V$ with $\vert S\vert=\gamma(G)$ is called a {\it Minimum Dominating Set}, a mds for short. Following \cite{DomBook} a mds is also called a $\gamma$-set.

Our aim is to determine the computational complexity of the task consisting of computing a $\gamma$-set or the domination number, for some subclasses of graphs defined by a finite set of forbidden subgraphs.

The decision problem associated with the Minimum Dominating Set is defined as:
\begin{center}
\begin{boxedminipage}{.99\textwidth}
\textsc{\sc Minimum Dominating Set} {\small(MDS problem)} \\[2pt]
\begin{tabular}{ r p{0.8\textwidth}}
\textit{~~~~Instance:} &a graph $G=(V,E)$ and an integer  $d,\vert V\vert \geq d\geq 0$.\\
\textit{Question:} &is $\gamma(G)\leq d$ ?
\end{tabular}
\end{boxedminipage}
\end{center}

In this paper we focus on the $(claw, H)$-free graphs complexity. The decision problems we study are defined this way.

\begin{center}
\begin{boxedminipage}{.99\textwidth}
\textsc{\sc $(claw,H)$-free Minimum Dominating Set}\\[2pt]
\begin{tabular}{ r p{0.8\textwidth}}
\textit{~~~~Instance:} &a $(claw,H)$-free graph $G=(V,E)$ and an integer $d,\vert V\vert\ge d\geq 0$.\\
\textit{Question:} &is $\gamma(G)\leq d$ ?
\end{tabular}
\end{boxedminipage}
\end{center}

The paper is organized as follows. The two next sections give the notations, the results of the literature and some basic properties that will be used in the sequel of the paper. Then we prove that the MDS problem is $NP$-complete for the class of cubic graphs, a result that is strangely missing in the literature. This result and its proof will be useful for several demonstrations later on. The sections \ref{C4toCk} to \ref{twotriangles} are concerned with the MDS problem in the class of claw-free graphs when at least one other graph is excluded. Among our different results,  in the section \ref{clawH} we give a complexity dichotomy for the class of $(claw,H)$-free graphs for all the graphs $H$ with no more than six vertices. We give a partial result for the $(claw,H)$-free MDS problem when $H$ has at least seven vertices  in the section \ref{twotriangles}. We summarize our main results and the problems left open in the conclusion.

\section{Definitions and notations}
An element $ab\in E$ is called an {\it edge}, if $ab\not\in E$ then $ab$ is called a {\it non-edge}. For a vertex $v\in V$ let us denote by $N(v)$ its neighborhood, $N[v]=N(v)\cup\{v\}$ its closed neighborhood. The set of vertices at distance exactly $k$ of a vertex $v$ is denoted by $N_k(v)$. Hence $N(v)=N_1(v)$ and $N[v]=N_0(v)\cup N_1(v)$. A vertex $v$ is {\it isolated} if $N(v)=\emptyset$. A vertex $v$ is {\it universal} if $N[v]=V$. Two distinct vertices $u,v$ are {\it twins} if $N(v)=N(u)$, are {\it false twins} if $N[v]=N[u]$. The graph $\overline{G}$ is the complementary graph of $G$, that is $V(G)=V(\overline{G})$ and $E(\overline{G})=\{uv\; |\; uv\not\in E(G)\}$.

For a subset $S\subseteq V$, we let $G[S]$ denote the subgraph of $G$ {\it induced} by $S$, which has vertex set~$S$ and edge set $\{uv\in E\; |\; u,v\in S\}$. Moreover, for a vertex $v\in V$, we write $G-v=G[V\setminus \{v\}]$ and for a subset $V'\subseteq V$ we write $G-V'=G[V\setminus V']$.
For a set $\{H_1,\ldots,H_p\}$ of graphs, $G$ is {\it $(H_1,\ldots,H_p)$-free} if $G$ has no induced subgraph isomorphic to a graph in $\{H_1,\ldots,H_p\}$; if $p=1$ we may write $H_1$-free instead of $(H_1)$-free. For two vertex disjoint induced subgraphs $G[A],G[B]$, $G[A]$ is {\it complete} to $G[B]$ if $ab$ is an edge for any $a\in A$ and $b\in B$, $G[A]$ is {\it anti-complete} to $G[B]$ if $ab$ is an non-edge for any $a\in A$ and $b\in B$. We denote by $G+H$ the disjoint union of the graphs $G$ and $H$.

A set $S\subseteq V$ is called a {\it stable set} or an {\it independent set} if any pairwise distinct vertices $u,v\in S$ are non adjacent. The maximum cardinality of an independent set in $G$ is denoted by $\alpha(G)$. A set $S\subseteq V$ is called a {\it clique} if any pairwise distinct vertices $u,v\in S$ are adjacent. When $G[V]$ is a clique then $G$ is a {\it complete graph}. We denote by $K_p,p\ge 1,$ the clique or the complete graph on $p$ vertices and $k.K_p$ is the disjoint union of $k$ cliques ($k \geq 0$).

For $n\geq 1$, the graph $P_n=u_1-u_2-\cdots-u_n$ denotes the {\it cordless path} on $n$ vertices, that is, $V({P_n})=\{u_1,\ldots,u_n\}$ and $E({P_n})=\{u_iu_{i+1}\; |\; 1\leq i\leq n-1\}$.
For $n\geq 3$, the graph $C_n$ denotes the {\it cordless cycle} on $n$ vertices, that is, $V({C_n})=\{u_1,\ldots,u_n\}$ and $E({C_n})=\{u_iu_{i+1}\; |\; 1\leq i\leq n-1\}\cup \{u_nu_1\}$. For $n\ge 4$, $C_n$ is called a {\it hole}. The graph $C_3=K_3$ is also called the {\it triangle}.
The {\it claw} $K_{1,3}$ is the 4-vertex star, that is, the graph with vertices $u$, $v_1$, $v_2$, $v_3$ and edges $uv_1$, $uv_2$, $uv_3$. The {\it diamond} is the 4-vertex complete graph $K_4$ minus an edge. The {\it net} is the graph with six vertices $u_1,u_2,u_3$, $v_1$, $v_2$, $v_3$ and edges $u_1u_2,u_2u_3,u_1u_3,u_1v_1,u_2v_2,u_3v_3$. The {\it bull} is the graph with five vertices $u_1,u_2,u_3$, $v_1$, $v_2$ and edges $u_1u_2,u_2u_3,u_1u_3,u_1v_1,u_2v_2$. The {\it paw} is the graph with four vertices $u_1,u_2,u_3$, $v_1$ and edges $u_1u_2,u_2u_3,u_1u_3,u_1v_1$. The {\it butterfly} is the graph with five vertices $u_1,u_2,v,v_1,v_2$ and edges $u_1u_2, u_1v, u_2v, v_1v_2, v_1v, v_2v$. The {\it house} is the graph with five vertices $u_1,u_2,u_3,u_4,v$ and edges $u_1u_2, u_2u_3, u_3u_4, u_4u_1, u_1v, u_2v$. The {\it gem} is the graph with five vertices $u_1,u_2,u_3,u_4,v$ and edges $u_1u_2, u_2u_3, u_3u_4$, $u_1v, u_2v, u_3v, u_4v$. The Figure \ref{defgraphs} below exposed all these graphs.

\begin{figure}[htbp]
\begin{center}
\includegraphics[width=10cm, height=5cm, keepaspectratio=true]{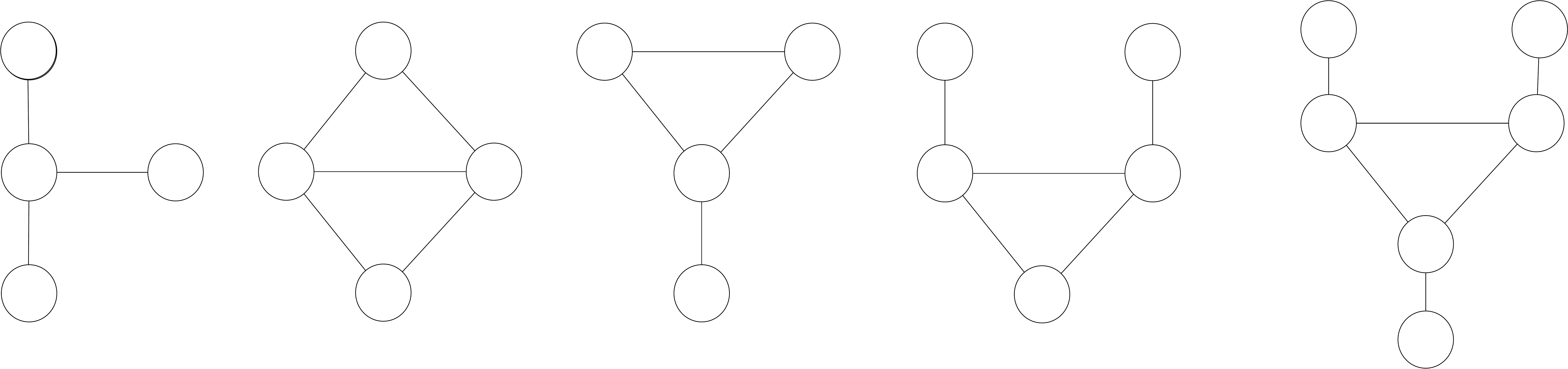}
\end{center}
\caption{The claw, the diamond, the paw, the bull, the net.}
\label{defgraphs}
\end{figure}

\begin{figure}[htbp]
\begin{center}
\includegraphics[width=6cm, height=5cm, keepaspectratio=true]{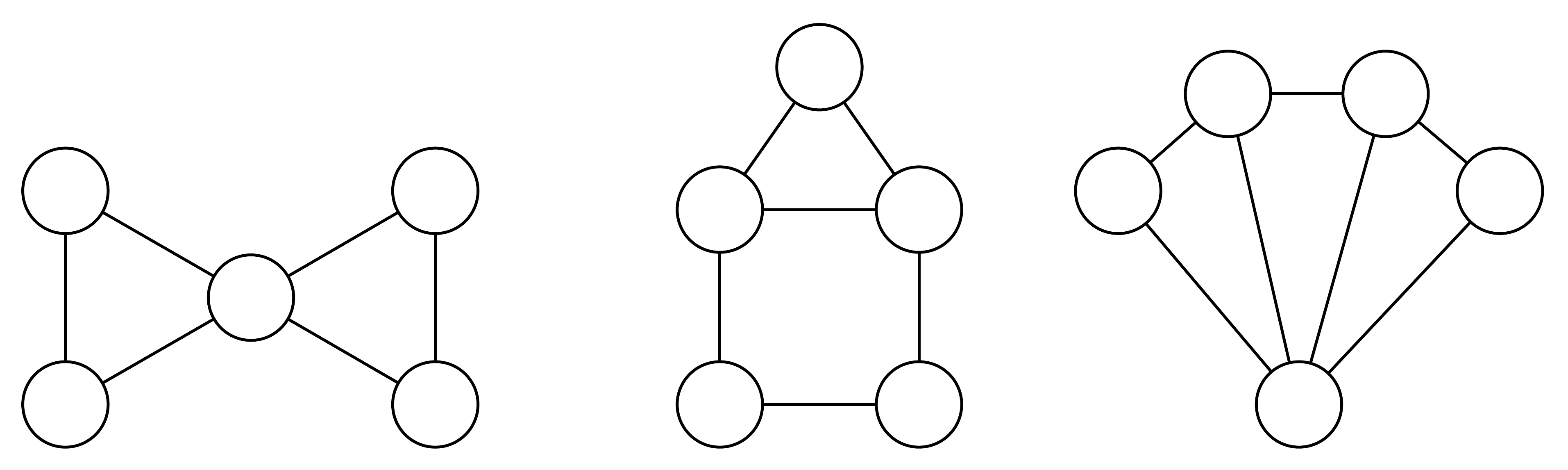}
\end{center}
\caption{The butterfly, the house, the gem.}
\label{defgraphs}
\end{figure}

The {\it $(k_1,k_2,k_3)$-triangle} consists of a triangle $T=\{v_1,v_2,v_3\}$ and three vertex disjoint paths $P_{k_i}$ connected to $v_i,1\le i\le 3$. Hence the net is the $(1,1,1)$-triangle, the bull is the $(1,1,0)$-triangle, the paw is the $(1,0,0)$-triangle. The {\it $k$-double-triangle} consists of two vertex disjoint triangles $T_1=\{v_1,v_2,v_3\},T_2=\{v_4,v_5,v_6\}$ and $P_k$ between $v_1$ and $v_4$. For simplicity the $0$-double-triangle is called the double-triangle (see Figure \ref{knets}).

\begin{figure}[htbp]
\begin{center}
\includegraphics[width=12cm, height=5cm, keepaspectratio=true]{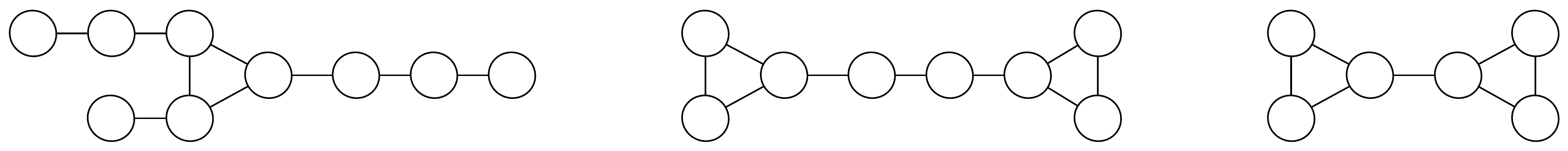}
\end{center}
\caption{The $(3,2,1)$-triangle, the $2$-double-triangle, the double-triangle.}
\label{knets}
\end{figure}

We denote $V^+\subseteq V$ the subset of vertices $v$ of $G$ such that $\gamma(G-v)>\gamma(G)$. A mds which is also an independent set is an {\it independent dominating set} and the minimum cardinality of an independent dominating set is denoted by $i(G)$. Clearly $\gamma(G)\le i(G)\le\alpha(G)$. Note also that a minimum independent dominating set is a {\it minimum maximal independent set}. \\

\section{Preliminary results}

We know from \cite{IndDom} that if the graph $G$ is $claw$-free then $\gamma(G)=i(G)$. From \cite{Yannakakis} the MDS problem
is $NP$-complete for the clas of $claw$-free graphs.

The minimum edge dominating set problem consists of finding $F\subseteq E$ a minimum set of edges such that for each edge $e\not\in F$, $e$ is incident to an edge $f, f\in F$. Taking $L(G)$, the line graph of $G$, a minimum edge dominating set in $G$ is a minimum dominating set in $L(G)$.
In \cite{Yannakakis} Yannakakis et al. showed that the minimum edge dominating set problem is $NP$-complete for bipartite subcubic graphs. Moreover, one can check that the graphs built in the transformation (from a variant of the $3$-SAT problem) are $C_4$-free. Also, line graphs of bipartite graphs are perfect, so they have no odd holes. Thus, for any of these graphs, the corresponding line graphs are $(claw, diamond, K_4,C_4)$-free and perfect (i.e. $(odd\ hole, odd\ antihole)$-free). It follows that the minimum dominating set problem is $NP$-complete for $(claw, diamond, K_4,C _4)$-free perfect graphs.

The minimum dominating set problem is polynomial for $\overline {paw}$-free graphs. The class of $(claw, \overline {claw})$-free graphs has bounded clique-width \cite{clawcoclaw}, so from \cite{cliquewidth} a $\gamma$-set can be computed in linear time. It is also polynomial for $(claw, net)$-free graphs \cite{clawnet}. A minimum dominating set can be computed in polynomial time for $2K_2$-free graphs so for $(claw, 2K_2)$-free graphs \cite{IndDomPoly}. In \cite{clawpk}, it is shown that computing a minimum dominating set is polynomial for the class of $(claw, P_8)$-free graphs. Since $K_2+2K_1\subseteq_i P_8$ and $4K_1\subseteq_i P_8$ computing a mds is polynomial for the classes of $(claw, K_2+2K_1)$-free and $(claw, 4K_1)$-free graphs.\\

Note that $(claw, H)$-free graphs is equivalent to $claw$-free graphs when $H$ contains a $claw$. Hence, the remaining $H$'s we consider are $claw$-free.\\

We give some preliminary easy properties that will be useful for many proofs given later.
\begin{prop}\label{HinH'NPC}
Let $H$, $H'$ two fixed graphs with $H' \subseteq_i H$. If \textsc{\sc $H'$-free Minimum Dominating Set}
 is $NP-complete$ then \textsc{\sc $H$-free Minimum Dominating Set} is $NP-complete$.
\end{prop}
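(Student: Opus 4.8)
The plan is to show that the hypothesis $H' \subseteq_i H$ forces an inclusion of the two graph classes, so that the \emph{identity} transformation is already a valid reduction; no gadget construction is needed.

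First I would establish the elementary containment: every $H'$-free graph is also $H$-free. Indeed, suppose for contradiction that $G$ is $H'$-free but contains an induced copy of $H$. Since $H' \subseteq_i H$ and an induced subgraph of an induced subgraph of $G$ is again an induced subgraph of $G$, it follows that $G$ contains an induced copy of $H'$, contradicting that $G$ is $H'$-free. Hence the class of $H'$-free graphs is contained in the class of $H$-free graphs. (The only point needing care here is the \emph{direction} of the inclusion: forbidding the smaller subgraph $H'$ yields the smaller graph class, which is the orientation that makes the argument go through.)

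Next I would record membership in NP: the \textsc{$H$-free Minimum Dominating Set} problem is a restriction of the general MDS problem, and a dominating set of size at most $d$ serves as a certificate verifiable in polynomial time; hence the problem is in NP. For the hardness, I take any instance $(G,d)$ of \textsc{$H'$-free Minimum Dominating Set}, where $G$ is $H'$-free, and map it to the same pair $(G,d)$ now read as an instance of \textsc{$H$-free Minimum Dominating Set}. By the containment established above, $G$ is indeed $H$-free, so this is a legitimate instance; the transformation is clearly computable in polynomial (in fact constant) time and preserves the answer, since the question $\gamma(G)\le d$ and the graph $G$ are both left unchanged.

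Combining the two parts, since \textsc{$H'$-free Minimum Dominating Set} is $NP$-complete by hypothesis and reduces to \textsc{$H$-free Minimum Dominating Set} via the identity map, the latter is $NP$-hard, and being in NP it is $NP$-complete. There is no real obstacle in this argument; the whole content lies in the correct reading of the class inclusion induced by $H' \subseteq_i H$, after which the reduction is immediate.
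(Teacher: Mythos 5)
Your proof is correct, and it is exactly the standard argument the paper has in mind: the authors state this property without proof as an easy observation, and the content is precisely your class inclusion (every $H'$-free graph is $H$-free when $H'\subseteq_i H$) followed by the identity reduction plus membership in NP. You correctly handled the one delicate point, namely the direction of the inclusion.
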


\begin{prop}\label{HinH'POLY}
Let $H$, $H'$ two fixed graphs with $H \subseteq_i H'$. If \textsc{\sc $H'$-free Minimum Dominating Set}
 is $polynomial$ then \textsc{\sc $H$-free Minimum Dominating Set} is $polynomial$.
\end{prop}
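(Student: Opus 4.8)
The plan is to reduce the statement to a simple containment between graph classes that follows directly from the hypothesis $H \subseteq_i H'$. The key observation I would isolate first is structural and independent of domination: if $H$ is an induced subgraph of $H'$, then every $H$-free graph is also $H'$-free. Equivalently, the class of $H$-free graphs is a subclass of the class of $H'$-free graphs. Note that this is precisely the class-inclusion dual of the one underlying Proposition \ref{HinH'NPC}, with the roles of $H$ and $H'$ interchanged.

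I would prove the inclusion by contraposition. Let $G$ be any graph that is not $H'$-free, so $G$ admits an induced subgraph isomorphic to $H'$. Because $H \subseteq_i H'$, the graph $H'$ itself contains an induced subgraph isomorphic to $H$, and since the induced-subgraph relation is transitive, composing the two induced embeddings produces an induced subgraph of $G$ isomorphic to $H$. Thus $G$ is not $H$-free, which establishes the contrapositive and hence the inclusion.

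The remainder is immediate. By hypothesis there is a polynomial-time algorithm solving the MDS problem on every $H'$-free graph. Given any instance $(G,d)$ of the $H$-free MDS problem, the input graph $G$ is $H$-free, hence $H'$-free by the inclusion above, so this algorithm applies to $(G,d)$ without any modification and decides whether $\gamma(G)\le d$ in polynomial time. Since the instance is passed through verbatim, both $G$ and $d$ being left unchanged, the polynomial running time is preserved.

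I expect no substantive obstacle: the argument is elementary and mirrors Proposition \ref{HinH'NPC}. The one point that warrants care is the direction of the class inclusion --- forbidding the smaller pattern $H$ is the more restrictive condition, so $H$-free graphs sit inside $H'$-free graphs, and not the other way around. Reversing this direction would invalidate the whole reduction.
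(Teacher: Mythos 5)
Your proof is correct, and the class-inclusion argument ($H \subseteq_i H'$ implies every $H$-free graph is $H'$-free, by transitivity of the induced-subgraph relation) is exactly the standard reasoning the authors intend; the paper itself states this property without proof as an "easy preliminary". You also correctly identify the one pitfall, namely the direction of the inclusion, which is the only thing that could go wrong here.
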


\begin{prop}\label{V=N[T]}
Let $k>0$ be a fixed positive integer and $G=(V,E)$ a graph. If there exists $T \subseteq_i G$ of size $\vert T\vert\leq k$ such that $V=N[T]$ then computing a minimum dominating set for $G$ is polynomial.
\end{prop}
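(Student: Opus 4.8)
The plan is to recognize that the hypothesis is nothing more than a promise that $\gamma(G)\le k$ for the fixed constant $k$, and then to exploit this constant bound by brute-force enumeration.

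First I would unpack the condition. Saying that $V=N[T]$ means exactly that every vertex of $G$ lies in $T$ or is adjacent to a vertex of $T$, i.e.\ that $T$ is a dominating set of $G$. Since $|T|\le k$, this yields $\gamma(G)\le |T|\le k$, so any $\gamma$-set has at most $k$ vertices.

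Next, because $k$ is fixed and independent of the input size $n=|V|$, I would simply enumerate all subsets $S\subseteq V$ with $|S|\le k$. There are $\sum_{j=0}^{k}\binom{n}{j}=O(n^{k})$ of them. For each candidate $S$ I would test whether $V=N[S]$, which takes $O(n^2)$ time (or $O(n+m)$ with adjacency lists), and I would keep the smallest $S$ that passes the test. Correctness holds because the first step guarantees that \emph{some} subset of size at most $k$ dominates $G$, so the enumeration necessarily finds a dominating set; and since it inspects every subset up to size $k$, the one it returns has minimum cardinality. The total running time is $O(n^{k})\cdot O(n^2)=O(n^{k+2})$, which is polynomial for fixed $k$.

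There is essentially no hard step here: the entire content is the observation that the hypothesis caps the domination number by a constant. The only subtlety worth stating explicitly is that we are not required to be handed $T$ in advance --- its mere existence suffices, since the enumeration itself rediscovers a dominating set of size at most $k$ without any knowledge of $T$.
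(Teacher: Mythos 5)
Your proposal is correct and follows exactly the paper's argument: the hypothesis forces $\gamma(G)\le k$, and a brute-force enumeration of all $O(n^k)$ subsets of size at most $k$ then finds a minimum dominating set in polynomial time for fixed $k$. You merely spell out the enumeration and the verification step in more detail than the paper does.
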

\begin{proof}
Since $\vert T \vert\leq k$ and $V=N[T]$, we have that $\gamma(G) \leq k$. So a minimum dominating set can be computed in $O(n^k)$ which is polynomial.
\end{proof}

\begin{prop}\label{kK1}
Let $k>0$ be any fixed positive integer. If a graph $G$ is $(claw,k.K_1)$-free then computing a minimum dominating set for $G$ is polynomial.
\end{prop}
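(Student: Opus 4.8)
The plan is to use the forbidden subgraph $k.K_1$ to bound the independence number, and then to exploit the inequality $\gamma(G)\le i(G)\le\alpha(G)$ recorded among the definitions to conclude that the domination number is bounded by the fixed constant $k-1$; boundedness of $\gamma(G)$ then yields a polynomial algorithm by brute force.

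First I would observe that, since $G$ is $k.K_1$-free, it has no independent set on $k$ vertices, so $\alpha(G)\le k-1$. Combining this with $\gamma(G)\le i(G)\le\alpha(G)$ immediately gives $\gamma(G)\le k-1$. To put this in the form required by Proposition~\ref{V=N[T]}, I would produce an explicit small dominating set: greedily compute any maximal independent set $T$, which can be done in polynomial time. A maximal independent set is necessarily a dominating set (a vertex with no neighbour in $T$ could be added to $T$, contradicting maximality), so $V=N[T]$, and by the bound on $\alpha(G)$ we have $\vert T\vert\le k-1$.

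Having exhibited $T\subseteq_i G$ with $\vert T\vert\le k-1$ and $V=N[T]$, I would invoke Proposition~\ref{V=N[T]} to conclude that a minimum dominating set can be computed in polynomial time; concretely, since $\gamma(G)\le k-1$, it suffices to enumerate all vertex subsets of size at most $k-1$ and test each one for domination, which runs in $O(n^{k-1})$ time.

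I expect no genuine obstacle here. It is worth noting that the claw-free hypothesis is never actually used: the whole argument rests on the $k.K_1$-free condition alone, through the chain $\gamma(G)\le\alpha(G)\le k-1$ (and, for claw-free graphs, one even has $\gamma(G)=i(G)$ by the result recalled in the preliminary section). The only step deserving a word of justification, namely that a maximal independent set is a dominating set, is immediate.
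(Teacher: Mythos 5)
Your proof is correct and follows essentially the same route as the paper: bound $\alpha(G)<k$ from $k.K_1$-freeness, deduce $\gamma(G)<k$ via $\gamma(G)\le i(G)\le\alpha(G)$, and brute-force over subsets of bounded size. The only cosmetic difference is that the paper enumerates the independent sets of size less than $k$, invoking $\gamma(G)=i(G)$ for claw-free graphs, whereas you enumerate all small subsets and correctly observe that the claw-free hypothesis is not actually needed.
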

\begin{proof}
Since $G$ is $k.K_1$-free we have $\alpha(G)<k$. Since $G$ is claw-free $\gamma(G)=i(G)\le\alpha(G)$. Now $k$ is fixed, so enumerating all the independent sets of size less than $k$ can be done in polynomial time. It follows that for any fixed $k\ge 1$, \textsc{\sc $(claw,k.K_1)$-free Minimum Dominating Set} is polynomial.
\end{proof}

\begin{prop}\label{leaf}
Let $G=(V,E)$ be a connected $claw$-free graph with $uv\in E$ such that $u$ is a leaf. The \textsc{Minimum Dominating Set} problem is polynomial for $G'=G-N[v]$ if and only if it is polynomial for $G$.
\end{prop}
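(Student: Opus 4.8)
The plan is to prove the sharper quantitative statement $\gamma(G)=\gamma(G')+1$ together with explicit, polynomial-time translations between $\gamma$-sets of $G$ and of $G'$. The claimed equivalence of the two complexities is then immediate, since $G'=G-N[v]$ is computable from $G$ in polynomial time and is itself claw-free, being an induced subgraph of the claw-free graph $G$.

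First I would record two structural facts. Because $u$ is a leaf we have $N(u)=\{v\}$, so every dominating set of $G$ contains $u$ or $v$; and if it contains $u$ but not $v$, then replacing $u$ by $v$ keeps it dominating (as $N[u]=\{u,v\}\subseteq N[v]$) without increasing its size. Hence there is always a $\gamma$-set $S$ of $G$ with $v\in S$ and $u\notin S$. Next I would invoke claw-freeness twice. Since $u$ is non-adjacent to every other neighbour of $v$, two non-adjacent vertices $a,b\in N(v)\setminus\{u\}$ would make $\{v;u,a,b\}$ a claw centred at $v$; thus $Q:=N(v)\setminus\{u\}$ is a clique. Similarly, for any $q\in Q$ and any $w_1,w_2\in N(q)\cap V(G')$, the vertices $v,w_1,w_2$ all lie in $N(q)$ with $vw_1,vw_2\notin E$ (as $w_1,w_2\notin N[v]$), so $w_1w_2\in E$ is forced to avoid a claw centred at $q$; that is, $N(q)\cap V(G')$ is a clique.

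With these in hand the equality $\gamma(G)=\gamma(G')+1$ follows in two steps. For the upper bound, given any $\gamma$-set $D$ of $G'$, the set $D\cup\{v\}$ dominates $G$ since $v$ covers $N[v]$ and $D$ covers $V(G')=V\setminus N[v]$, whence $\gamma(G)\le\gamma(G')+1$. For the lower bound, take the $\gamma$-set $S$ above with $v\in S$ and $u\notin S$; within $V(G')$ the only members of $S$ that dominate are those in $S\cap V(G')$ together with those in $S\cap Q$, and each $q\in S\cap Q$ dominates in $G'$ exactly the clique $N(q)\cap V(G')$. Replacing every such $q$ by a single vertex $w_q$ of that clique (and discarding $q$ when the clique is empty) produces a set $D\subseteq V(G')$ that dominates $G'$ with $|D|\le|S|-1=\gamma(G)-1$; hence $\gamma(G')\le\gamma(G)-1$, and the two bounds give $\gamma(G)=\gamma(G')+1$.

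Finally I would cash this out as the stated complexity equivalence: a $\gamma$-set of $G'$ yields a $\gamma$-set of $G$ by adjoining $v$, and a $\gamma$-set of $G$ yields a $\gamma$-set of $G'$ by the leaf correction and the clique-replacement procedure above, both performable in polynomial time; combined with the fact that $G'$ is obtained from $G$ in polynomial time, this makes computing a $\gamma$-set for $G$ and for $G'$ polynomially equivalent. The delicate point, and the only place where claw-freeness is essential, is the lower bound: a priori the auxiliary dominators of $G$ could lie in $N(v)$ and cover vertices of $V(G')$ that no single vertex of $G'$ covers, which would break the reduction. The clique property of $N(q)\cap V(G')$ is precisely what forbids this and lets each outside dominator be pushed into $G'$ at no cost, so I expect that to be the main obstacle to pin down rigorously.
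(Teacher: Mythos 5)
Your proof is correct, and at its core it performs the same claw-freeness swap as the paper, but packaged more directly. The paper routes through the set $V^+$ of critical vertices: it shows that no vertex $s$ of the clique $K=N(v)\setminus\{u\}$ satisfies $\gamma(G-s)>\gamma(G)$, invoking the cited result of Bauer, Harary, Nieminen and Suffel that such an $s$ would lie in every $\gamma$-set, and then exhibiting a $\gamma$-set avoiding $s$ by exchanging $s$ for a vertex $w$ of the clique $N(s)\setminus N[v]$; from this it infers $\gamma(G-K)\le\gamma(G)$, hence $\gamma(G')+1\le\gamma(G)$, and finishes with the same translations you give. Your lower-bound step is exactly that exchange --- each $q\in S\cap Q$ is replaced by a vertex of the clique $N(q)\cap V(G')$ --- but carried out directly on a fixed $\gamma$-set containing $v$, with no appeal to $V^+$ or to the alteration-set literature. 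This buys two things: the argument is self-contained, and it sidesteps the paper's slightly loose inference from ``no single $s\in K$ is critical'' to ``all of $K$ may be deleted simultaneously without increasing $\gamma$'', since your simultaneous replacement of all of $S\cap Q$ handles that point explicitly. Both routes end at the same quantitative statement $\gamma(G)=\gamma(G')+1$ with polynomial-time conversions in both directions, which is what the stated equivalence needs.
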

\begin{proof}
Since $G$ is $claw$-free $K=N(v)-u$ is a clique. Let $s\in K$, we show that $s\not \in V^+$. For contradiction we assume that $s \in V^+$. As shown in \cite{DomAlter}, $s$ is in every $\gamma$-set of $G$. Let $\Gamma$ be a $\gamma$-set with $v\in \Gamma$. Let $W=N(s)\setminus N(v)$. If $W=\emptyset$ then $\Gamma-s$ is a dominating set, a contradiction. It follows $W\not=\emptyset$ and since $G$ is $claw$-free, $W$ is a clique. Let $w\in W$. $\Gamma'=(\Gamma-\{s\})\cup\{w\}$ is another $\gamma$-set, a contradiction. Hence $\gamma(G-K)\le\gamma(G)$. Since $G-K$ consists of $G'$ and the component $uv$, we have that $\gamma(G-K)=\gamma(G')+1$. Then from $\Gamma'$ a $\gamma$-set of $G'$ we obtain $\Gamma'\cup\{v\}$ a $\gamma$-set of $G$ in polynomial time. Reciprocally, let $\Gamma$ be a $\gamma$-set of $G$. Since $u$ is a leaf we assume that $v\in \Gamma$. Then $\Gamma-\{v\}$ is a $\gamma$-set for $G'$. Trivially it can be done in polynomial time from $\Gamma$.
\end{proof}

\section{Cubic and $k$-regular graphs for $k\ge 3$ when $k$ is odd}
In \cite{Demange} Demange et al. it is proved that, for any fixed $k\ge 3$, the Minimum Edge Dominating Set problem is $NP$-complete for bipartite $k$-regular graphs. Transferring this result into the line graph we have that the Minimum Dominating Set problem is $NP$-complete for $2(k-1)$-regular graphs, $k\ge 3$, that is regular graphs with even degree at least four.

\begin{theorem}\label{cubic}
The Minimum Dominating Set problem is $NP$-complete for cubic graphs.
\end{theorem}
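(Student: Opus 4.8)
The plan is to reduce from a known NP-complete problem, and the natural candidate is the MDS problem restricted to a more structured class, or a classical covering/domination problem. Since the paper has just recalled (via Demange et al.) that MDS is NP-complete for $2(k-1)$-regular graphs of even degree at least four, the degree-3 case is genuinely missing and must be attacked on its own. I would reduce from a problem about graphs of bounded degree --- the most convenient source is the MDS problem (or Dominating Set) for graphs of maximum degree $3$, or alternatively from a SAT variant or from Exact Cover / Vertex Cover on cubic graphs. The cleanest route is to start from Dominating Set on graphs with maximum degree $3$ (known to be NP-complete) and transform each vertex into a cubic ``gadget'' that enforces regularity while controlling the domination number in a predictable, additive way.

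The key steps, in order, would be: First, establish membership in NP, which is immediate since a candidate dominating set of size $\le d$ can be verified in polynomial time. Second, fix a source instance $G$ of maximum degree $3$ (so vertices have degree $1$, $2$, or $3$) and design local gadgets that raise every vertex of degree $<3$ up to degree exactly $3$ without disturbing the essential domination structure. The standard trick is to attach, to each vertex of deficient degree, a small cubic-completing gadget (for instance a copy of a fixed graph with one or two half-edges, such as a $K_4$ minus an edge with pendant attachment points, or a path/cycle gadget) whose internal domination cost is a fixed constant and whose optimal dominator can always be chosen inside the gadget. Third, I would prove a clean equation of the form $\gamma(G') = \gamma(G) + c\cdot(\text{number of gadgets})$, where $G'$ is the cubic graph produced and $c$ is the per-gadget overhead; this is proved by two inequalities, showing how to lift a dominating set of $G$ to one of $G'$ and how to push a dominating set of $G'$ back down to $G$ while never increasing cost.

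The main obstacle I anticipate is the \emph{degree bookkeeping combined with domination accounting}: making the graph $3$-regular is easy in isolation, but each gadget I bolt on both changes degrees and contributes its own vertices that must be dominated, and I must guarantee that an optimal solution of $G'$ never ``cheats'' by using gadget vertices to dominate original vertices in a way that breaks the reduction. Concretely, the gadget must be designed so that (i) it always forces at least $c$ dominators internally, (ref{leaf} and the clique structure of neighborhoods in claw-free settings are not available here since $G'$ need not be claw-free, so I must argue directly), and (ii) any dominating set can be rewritten, without increasing its size, so that the attachment vertices behave exactly as a single ``choice'' mirroring whether the original vertex is in the dominating set. Proving this exchange argument carefully --- that gadget-internal dominators can be normalized and the boundary interaction decouples additively --- is where the real work lies, and I would devote the bulk of the proof to verifying the two directions of the cost equation rather than to the construction itself.
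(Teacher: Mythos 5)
Your overall strategy is viable but genuinely different from the paper's: the paper reduces \emph{downward} from MDS on $4$-regular graphs (which it obtains from the Demange et al.\ edge-domination result via line graphs), replacing each degree-$4$ vertex by a $44$-vertex cubic gadget with four corners, one per incident edge, so that the gadget costs $12$ if the original vertex is ``chosen'' and $11$ otherwise; you instead propose to reduce \emph{upward} from MDS on graphs of maximum degree $3$, padding deficient degrees with cubic-completing gadgets and proving $\gamma(G')=\gamma(G)+c\cdot(\text{number of gadgets})$. Your source problem is legitimate and the padding route is in principle simpler, since it does not require splitting a vertex into several degree-$3$ copies while rerouting its four incident edges.

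However, as written there is a genuine gap: the gadget, which is the entire content of such a reduction, is never exhibited, and the one property that makes or breaks the accounting is only named as a desideratum, not established. Specifically, your additive equation fails in the lower-bound direction unless you can show that (i) the gadget interior (the gadget minus its attachment vertices) already forces $c$ dominators, and (ii) no $c$-vertex set achieving this contains a gadget vertex adjacent to the original vertex $v$ --- otherwise a solution of $G'$ can dominate $v$ ``for free'' through the gadget, and projecting back to $G$ does not yield a dominating set of size $\le d$. This is precisely the property the paper engineers explicitly (``with ten vertices it is not possible to dominate $H_v-\{a,b,c,d\}$'', together with the uniqueness of the $11$-vertex partial dominating set and the fact that it contains none of the other corners), and proving it for a concrete gadget is where all the work lies. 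Your candidate gadgets ($K_4$ minus an edge, small path/cycle pieces) do not obviously have this property --- for $K_4-e$ attached by its two degree-$2$ vertices to $v$, a single internal dominator adjacent to $v$ suffices, which is exactly the cheating you want to exclude --- so the proposal cannot be accepted as a proof until a specific gadget is given and both directions of the cost equation are verified against it.
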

\begin{proof}
We give a polynomial reduction from the Minimum Dominating Set problem which is $NP$-complete for $4$-regular graphs.

Let $G$ be a $4$-regular graph and $d$ be an instance of Minimum Dominating Set. We build $G'$ and $d'$ another instance of Minimum Dominating Set where $G'$ is cubic. Let $n$ be the number of vertices of $G$, we take $d'=11n+d$. Each $4$-vertex $v$ of $G$ is replaced with the gadget shown in Figure \ref{gadcub}. We denote by $H_v$ the gadget associated with $v$.

The gadget $H_v$ has four {\it corners}, that is the vertices $a,b,c,d$ in Figure \ref{gadcub}. Each of them corresponds to an edge incident to $v$ in $G$. To the edge $a=uv$ in $G$ corresponds an edge connecting the two corners $a$ of $H_v$ and $H_u$. Hence $G'$ is cubic.

For each $H_v$, the subgraph of $G'$ induced by the $44$ vertices of $H_v$, satisfies the following properties. Note that by symmetry the four corners play the same role. Clearly $\gamma(H_v)=12$ and there exists a minimum dominating set that contains the four corners, see Figure \ref{gadgamma}. For each corner, says $a$, $H_v-a$ has a unique minimum dominating set of size $\gamma(H_v-a)=11$, see Figure \ref{gadpartdom}. None of the three other corners $b,c,d$ are in this minimum dominating set. With ten vertices it is not possible to dominate $H_v-\{a,b,c,d\}$.

\begin{figure}[htbp]
\begin{center}
\includegraphics[width=14cm, height=7cm, keepaspectratio=true]{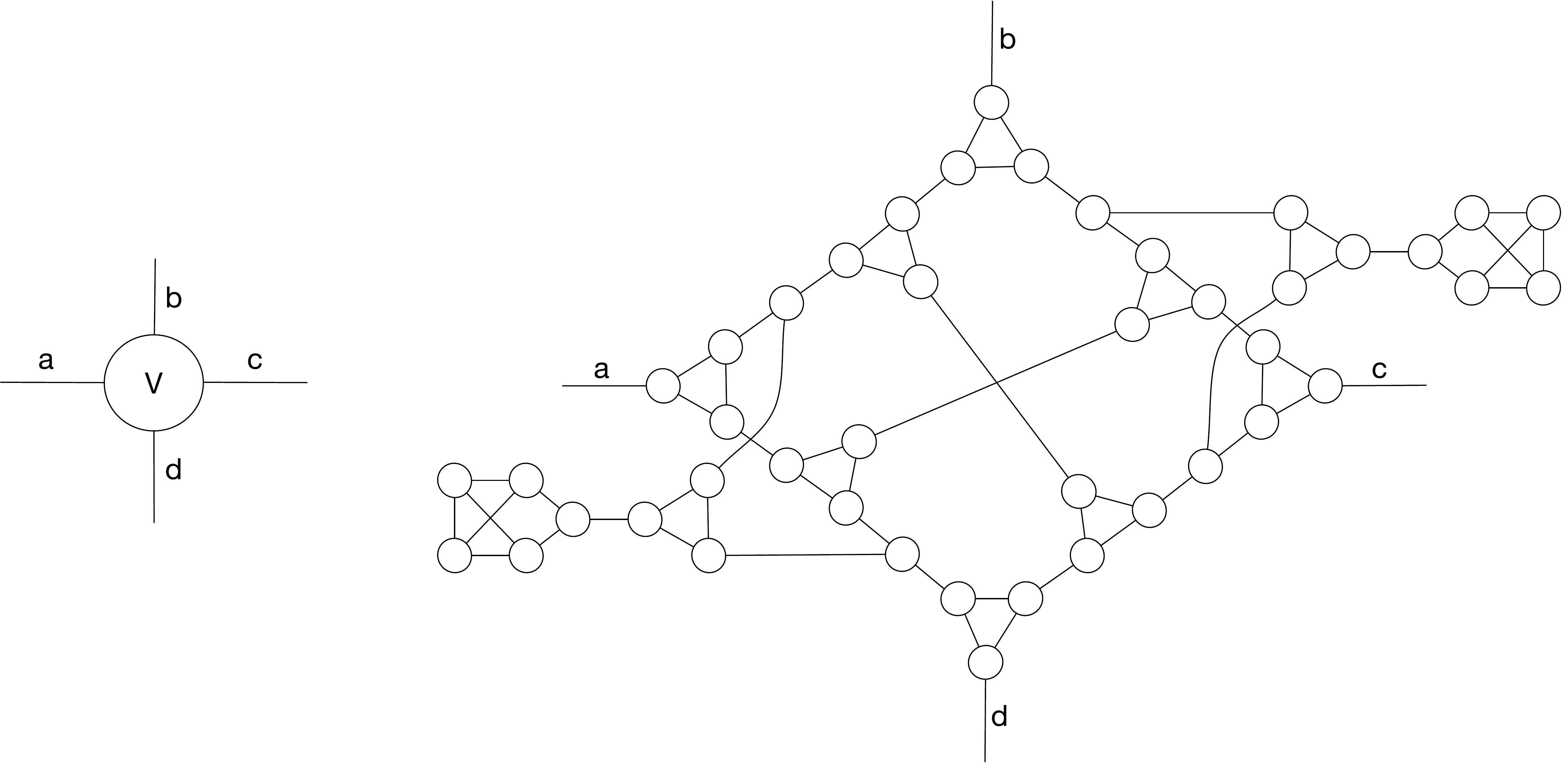}
\end{center}
\caption{A $4$-vertex $v$ and its associated gadget $H_v$.}
\label{gadcub}
\end{figure}

Let $\Gamma$ be a dominating set of $G$ such that $\vert \Gamma\vert\le d$. We give $\Gamma'$ a dominating set of $G'$ as follows: if $v\in \Gamma$ then the twelve vertices of the minimum dominating set of $H_v$ (the $12$ black vertices of Figure \ref{gadgamma}), are put in $\Gamma'$. If $v\not\in \Gamma$ then $v$ is dominated by a vertex $w, w\in \Gamma$. Let $a=vw$. The eleven vertices of the minimum dominating set of $H_v-a$ (the $11$ black vertices of Figure \ref{gadpartdom}), are put in $\Gamma'$.
Hence the corner $a$ of $H_v$ is dominated by the corner $a$ of $H_w$. So $\Gamma'$ is a dominating set of size $\vert \Gamma'\vert=11n+d=d'$.

\begin{figure}[htbp]
\begin{center}
\includegraphics[width=14cm, height=7cm, keepaspectratio=true]{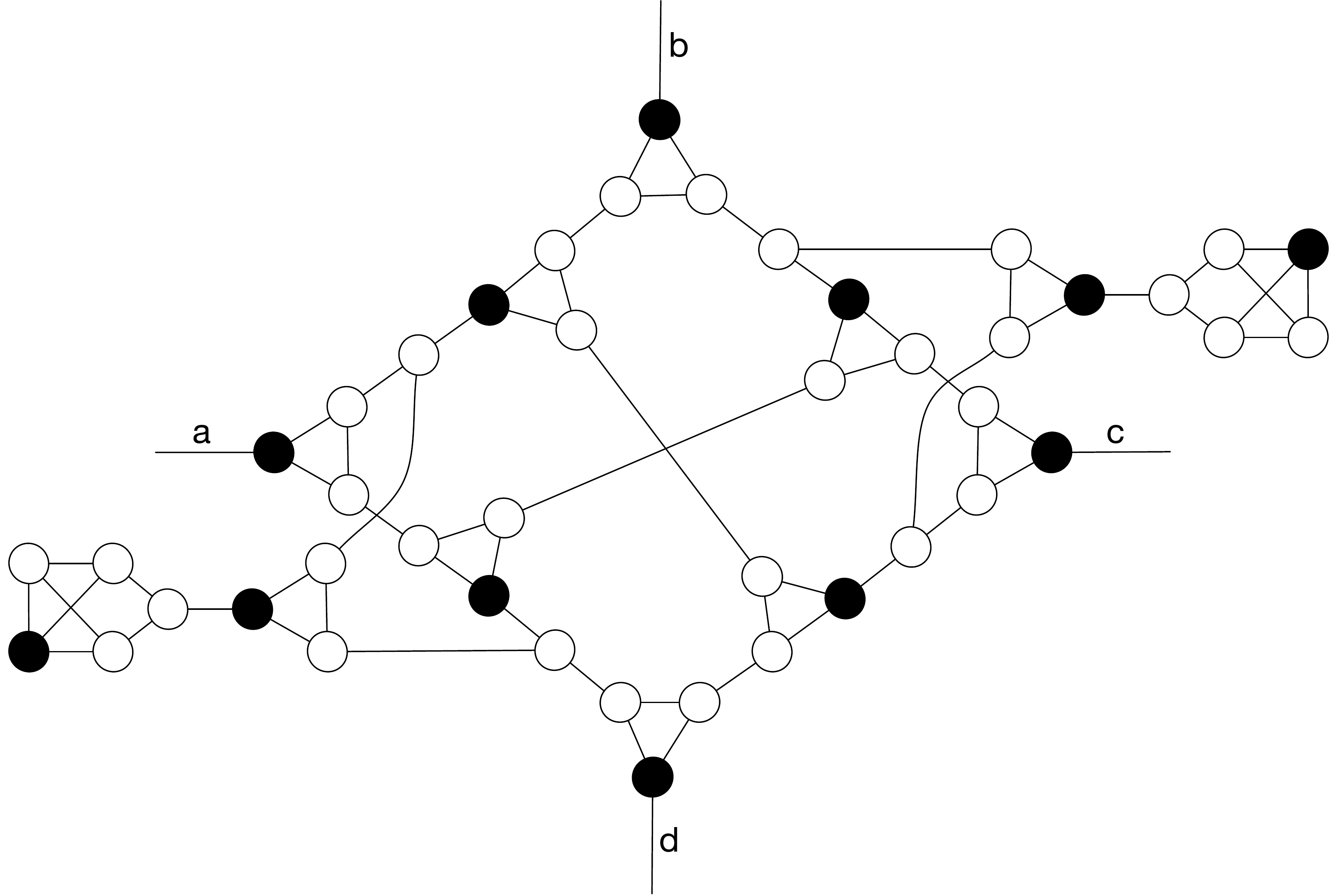}
\end{center}
\caption{A minimum dominating set for the gadget.}
\label{gadgamma}
\end{figure}

\begin{figure}[htbp]
\begin{center}
\includegraphics[width=14cm, height=7cm, keepaspectratio=true]{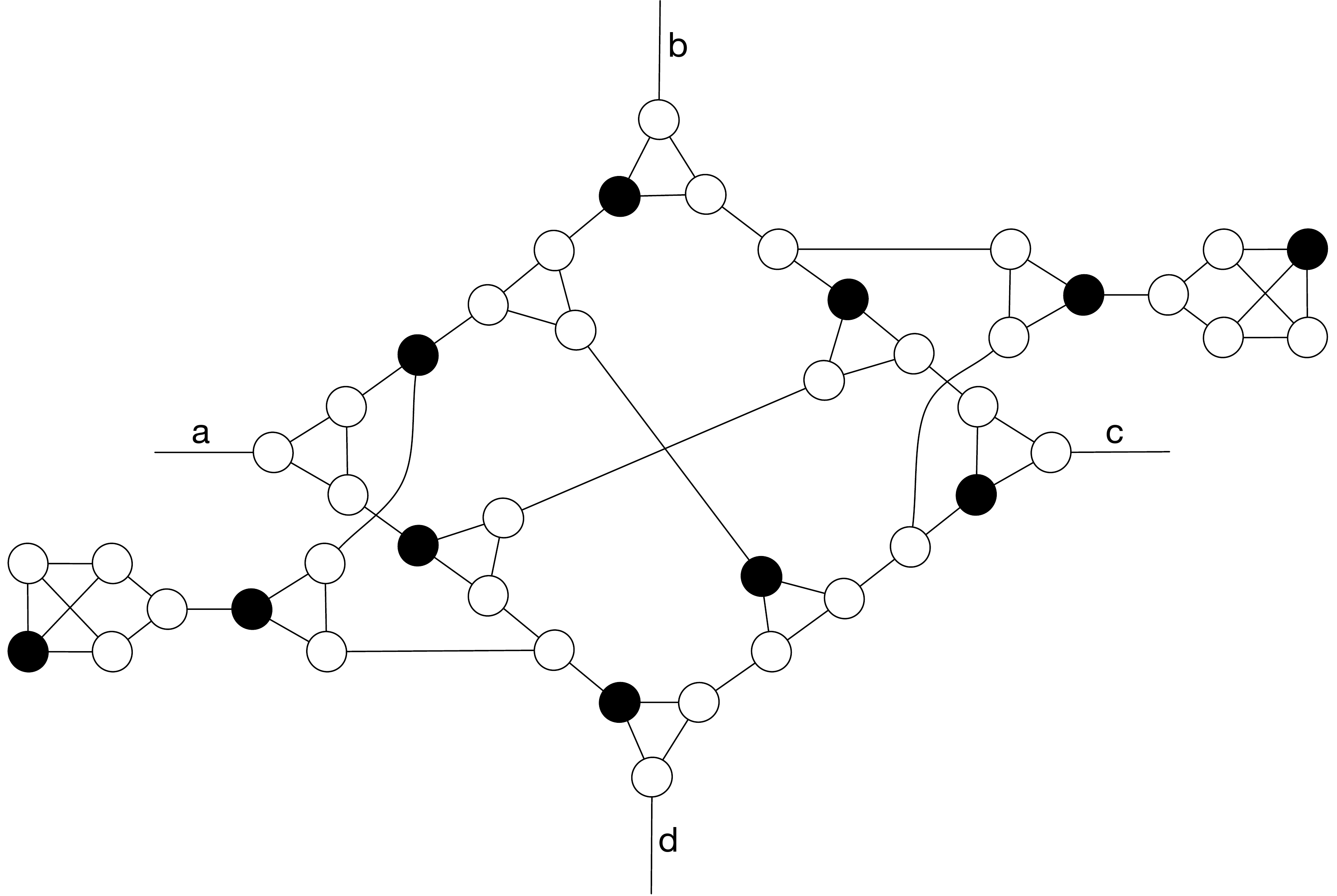}
\end{center}
\caption{A partial dominating set of size $11$ for the gadget.}
\label{gadpartdom}
\end{figure}
Let $\Gamma'$ be a dominating set of $G'$ such that $\vert \Gamma\vert\le 11n+d=d'$. We show that $\gamma(G)\le d$. From the last property, we gave at least eleven vertices to dominate each $H_v$ (note that it is necessary to dominate $H_v-\{a,b,c,d\}$). Since $d'=11n+d'$ at most $d$ gadgets $H_v$ must contain at least $12$ vertices. Since twelve vertices are enough for one $H_v$, we can assume there are $d$ gadgets with $\vert H_v\cap \Gamma'\vert=12$. Taking the $d$ corresponding vertices $v$ in $\Gamma$, $\Gamma$ is a dominating set of $G$. Hence we have that $\gamma(G)\le\vert\Gamma\vert\le d$.
\end{proof}

The graphs built in the proof above are $(butterfly,house,gem)$-free, so we have the following.
\begin{coro}\label{cubcoro}
 The Minimum Dominating Set problem is $NP$-complete for $(K_4,$ $butterfly, house,gem)$-free cubic graphs.
\end{coro}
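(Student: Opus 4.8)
The corollary claims that the cubic graphs constructed in Theorem~\ref{cubic} are $(K_4, butterfly, house, gem)$-free, so that the $NP$-completeness transfers to this restricted class. My plan is to verify directly that each forbidden subgraph is absent from the construction, exploiting the fact that $G'$ is assembled from finitely many isomorphic copies of a single gadget $H_v$ (with $44$ vertices each) joined only through single edges between matching corners.

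First I would record the only structural facts needed. Since $G'$ is cubic, every vertex has degree exactly $3$; this immediately rules out certain configurations by a degree count. The graph decomposes into gadget-interiors $H_v$ together with the corner-to-corner edges $uv$ of $G$, and each such connecting edge is a bridge-like link joining a single corner of $H_v$ to a single corner of $H_u$. Because each corner has all but one of its neighbours inside its own gadget, any small induced subgraph that spans two gadgets must pass through exactly one such connecting edge, and the two endpoints (the two corners) each have only one neighbour on the far side. I would use this to argue that no forbidden subgraph can straddle two gadgets: each of the four target graphs (butterfly, house, gem, $K_4$) is $2$-connected or contains a triangle through the would-be cut edge, which is impossible since a single connecting edge lies on no triangle of $G'$.

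The core of the argument then reduces to a finite, local check inside a single gadget $H_v$ (and its immediate corner-edges). For $K_4$: a $4$-regular-free, i.e. cubic, graph can contain $K_4$ only if some vertex has its three neighbours mutually adjacent; I would confirm the gadget's internal structure (as drawn in Figure~\ref{gadcub}) contains no such vertex. For the butterfly and the house and the gem, each of these graphs has a vertex of degree $4$ (the butterfly's apex $v$, the gem's apex $v$, and the house has a vertex of degree $3$ plus triangle structure), which either violates the cubic degree bound directly or forces a triangle-plus-extra-edge pattern that the gadget avoids; I would check these against the adjacencies of $H_v$. Concretely, the gem contains a vertex adjacent to four others, impossible in a cubic graph, so the gem is excluded purely by the degree bound; the butterfly likewise has a degree-$4$ vertex and is excluded the same way. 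The house and $K_4$ require the genuine local inspection of the triangles appearing in $H_v$.

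The main obstacle is that the verification is only as reliable as the gadget figure, which is not transcribed in the text: the whole corollary rests on the concrete adjacency structure of the $44$-vertex gadget $H_v$. I would therefore phrase the proof as a finite case analysis keyed to Figure~\ref{gadcub}, first eliminating cross-gadget copies via the no-triangle-on-a-connecting-edge observation, then eliminating degree-$4$-containing patterns (gem, butterfly) by the cubicity of $G'$, and finally checking by inspection that the triangles present inside $H_v$ never extend to a $K_4$ or a house. I expect the degree argument to dispose of the gem and butterfly cleanly, while the $K_4$ and house cases will demand the explicit local check; the only real risk is a mis-reading of the gadget's internal edges, so I would state each forbidden configuration's defining feature (a vertex of degree $4$; a $K_4$; a $C_4$ with a chord-completing apex) and match it against the gadget rather than attempt an exhaustive subgraph enumeration.
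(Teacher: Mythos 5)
Your plan is sound and is, in substance, the same approach as the paper's --- which is to say, the paper offers no argument at all beyond the one-line assertion that the graphs built in Theorem~\ref{cubic} are $(butterfly,house,gem)$-free, so any honest verification has to proceed exactly as you propose: confine each forbidden pattern to a single gadget and then inspect Figure~\ref{gadcub}. Your degree observation is correct and worth making explicit: the gem and the butterfly each contain a vertex of degree $4$, so they cannot occur as (induced) subgraphs of \emph{any} subcubic graph; that part of the corollary is free and does not depend on the gadget at all. For $K_4$ there is an even quicker argument you did not use: in a cubic graph every vertex of a $K_4$ already has all three of its neighbours inside the $K_4$, so a $K_4$ would be an entire connected component, which is impossible since $G'$ is connected with $44n>4$ vertices. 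That leaves only the house, which has maximum degree $3$ and really does require reading the adjacencies of $H_v$ off the figure (your cross-gadget exclusion via the observation that a corner-to-corner edge lies on no short cycle is fine, since the house is $2$-connected). So your proof is correct modulo the same figure-dependence the paper itself suffers from, and it isolates that dependence to the single case of the house, which is a genuine improvement in precision over the paper's bare claim.
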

\begin{theorem}\label{oddregular}
For any odd $k\ge 3$, the Minimum Dominating Set problem is $NP$-complete for $k$-regular graphs.
\end{theorem}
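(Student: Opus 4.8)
The plan is to treat $k=3$ as already done (it is exactly Theorem \ref{cubic}) and to handle every odd $k\ge 5$ by a single polynomial reduction \emph{from} the Minimum Dominating Set problem on cubic graphs, which is $NP$-complete by Theorem \ref{cubic}. As the introduction announces that the cubic construction is meant to feed later proofs, I would reuse its architecture verbatim: given a cubic graph $G=(V,E)$ with $|V|=n$ and an integer $d$, I build a $k$-regular graph $G'$ and an integer $d'$ with $\gamma(G)\le d$ if and only if $\gamma(G')\le d'$. Note that for odd $k$ the increment $k-3$ is even, which is what lets the degree bookkeeping close up.

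The construction replaces each vertex $v\in V$ by a gadget $H_v^{k}$ that plays exactly the role of $H_v$ in Theorem \ref{cubic}. The gadget has three \emph{corners}, one per edge of $G$ incident to $v$; every non-corner vertex of $H_v^{k}$ has degree $k$, and each corner has degree $k-1$ inside the gadget. For each edge $uv\in E$ I join the matching corners of $H_v^{k}$ and $H_u^{k}$ by a single edge, which lifts both corners to degree $k$. Since $G$ is cubic every corner is matched exactly once, so $G'$ is $k$-regular; choosing $H_v^{k}$ to have an even number of vertices makes the parity condition $k\cdot|V(G')|$ even hold, so such a $G'$ exists. The gadget is required to enjoy the same three properties as the cubic one, for a constant $\gamma_0=\gamma_0(k)$: (i) $\gamma(H_v^{k})=\gamma_0$ with a minimum dominating set containing the three corners; (ii) for each corner $a$, $H_v^{k}-a$ has a (unique) minimum dominating set of size $\gamma_0-1$ containing none of the other corners; and (iii) no set of $\gamma_0-2$ vertices dominates $H_v^{k}$ deprived of its three corners. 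I then set $d'=(\gamma_0-1)n+d$.

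With (i)--(iii) in hand the equivalence is the argument of Theorem \ref{cubic}, almost word for word. Forward: from a dominating set $\Gamma$ of $G$ with $|\Gamma|\le d$, use the $\gamma_0$ vertices of (i) in each $H_v^{k}$ with $v\in\Gamma$ and the $\gamma_0-1$ vertices of (ii) in each $H_v^{k}$ with $v\notin\Gamma$, dropping the corner $a$ that sits on an edge $vw$ with $w\in\Gamma$; the remaining two corners of $H_v^{k}$ are dominated inside by (ii), while corner $a$ is dominated across the edge by the corner of $H_w^{k}$, which is selected by (i). This dominating set of $G'$ has size $\gamma_0|\Gamma|+(\gamma_0-1)(n-|\Gamma|)=(\gamma_0-1)n+|\Gamma|\le d'$. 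Reverse: by (iii) every gadget receives at least $\gamma_0-1$ vertices, so the budget $d'$ leaves room for at most $d$ gadgets to carry a $\gamma_0$-th vertex; a gadget holding only $\gamma_0-1$ vertices contains no corner by (ii), so its uncovered corner is dominated by the matched corner of a neighbouring gadget, forcing that neighbour to spend $\gamma_0$. Declaring $\Gamma=\{v:\ |S'\cap H_v^{k}|\ge\gamma_0\}$ thus gives a dominating set of $G$ with $|\Gamma|\le d$.

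The main obstacle is the explicit construction of $H_v^{k}$ for every odd $k\ge 5$ together with the verification of (i)--(iii); the case $k=3$ is furnished by the cubic gadget, and one expects a $k$-regular analogue obtained by padding a core resembling $H_v$ with private cliques, or with copies of a small fixed $k$-regular block, so as to raise every internal degree to $k$ without opening cheaper ways to cover the interior. As in the cubic proof, the delicate point is property (iii): the padding must be arranged so that lifting the degrees to $k$ creates no dominating set of the corner-free gadget of size below $\gamma_0-1$, which is exactly what keeps the equality $d'=(\gamma_0-1)n+d$ tight. (An alternative to a single reduction would be to induct on odd $k$, reducing $k$-regular from $(k-2)$-regular with a two-corner-per-step padding gadget, but reducing directly from the cubic case keeps the statement and the counting uniform.)
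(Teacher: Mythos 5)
There is a genuine gap. Your reduction template (vertex gadgets with three corners, properties (i)--(iii), the counting $d'=(\gamma_0-1)n+d$) is a faithful copy of the machinery of Theorem \ref{cubic}, but the entire content of the theorem for odd $k\ge 5$ would then reside in the explicit construction of a $k$-regular-compatible gadget $H_v^{k}$ satisfying (i)--(iii), and you never produce one -- you explicitly defer it as ``the main obstacle.'' A reduction whose gadget is only required to exist, with the required properties listed but not verified on a concrete graph, is not a proof; in particular property (iii), which you yourself identify as the delicate point, is exactly the kind of claim that can fail once one starts padding with cliques to raise internal degrees.

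The paper avoids this entirely with a much lighter construction: it keeps the cubic graph $G$ intact as an induced subgraph of $G'$ and, to each original vertex $v$, attaches $\frac{k-3}{2}$ pendant copies of $K_{k+1}-e$, joining $v$ to the two endpoints $s,t$ of the missing edge of each copy. This raises $\deg(v)$ from $3$ to $k$ (using the same parity observation you make, that $k-3$ is even), while every vertex of $K_{k+1}-e$ other than $s,t$ already has degree $k$ and $s,t$ reach degree $k$ via the edge to $v$; so $G'$ is $k$-regular with no corner bookkeeping at all. Each pendant block must contribute at least one vertex to any dominating set (its non-$s,t$ vertices see only the block), one such vertex $w\ne s,t$ dominates the whole block, and $w$ does not dominate $v$; hence $\gamma(G')=\gamma(G)+\frac{k-3}{2}n$ and the equivalence is immediate. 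If you want to salvage your write-up, the cleanest fix is to discard the corner-gadget scheme and adopt this degree-padding argument, or else actually exhibit and verify your $H_v^{k}$.
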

\begin{proof}
\begin{figure}[htbp]
\begin{center}
\includegraphics[width=16cm, height=5cm, keepaspectratio=true]{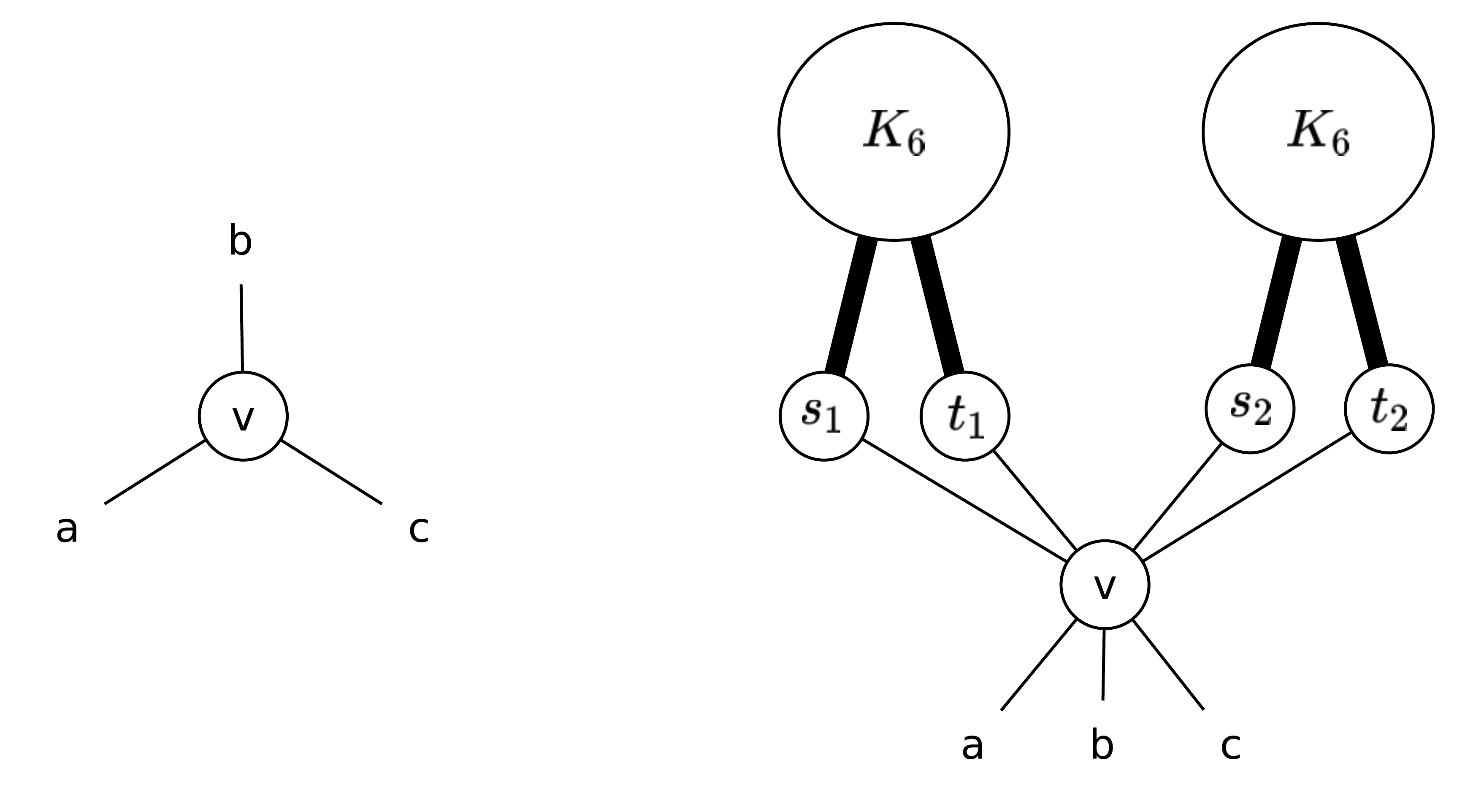}
\end{center}
\caption{A $3$-vertex $v$ and its replacement in the case of $7$-regular graphs. The bold edge $s_1K_6$ means that $s_1$ is connected to the $6$ vertices of $K_6$.}
\label{5reg}
\end{figure}
The case $k=3$ is proved by Theorem \ref{cubic}. Now let $k\ge 5$ be an odd integer.
Given a cubic graph $G=(V,E)$, we construct a graph $G'=(V',E')$ as follows. Each vertex $v, v\in V',$ is connected to $(k-3)\over 2$ components $K_{k+1}-e$ (the complete graph with $k+1$ vertices minus an edge). For each component $K_{k+1}-e$, let $e=st$. The vertex $v$ is connected to $s$ and $t$. Figure \ref{5reg} shows the transformation for $k=7$. For each component, one vertex $w, w\ne s,t$ is necessary in a minimum dominating set. Since $v$ is not dominated by $w$, the sequel of the proof is easy.
\end{proof}

\section{$(claw, C_4,\ldots, C_k)$-free graphs}\label{C4toCk}
With similar reduction as the one used for proving Theorem \ref{cubic}, we show the following theorem.
\begin{theorem}\label{c4ck}
For any fixed integer $k,k\ge 4,$ the Minimum Dominating Set problem is $NP$-complete for $(claw, C_4,\ldots, C_k)$-free subcubic graphs.
\end{theorem}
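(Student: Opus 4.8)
The plan is to imitate the reduction of Theorem \ref{cubic}, replacing its ad hoc $44$-vertex gadget by a gadget whose only induced cycles of length at most $k$ are triangles, so that the graph produced is simultaneously claw-free, subcubic and $(C_4,\ldots,C_k)$-free. I would reduce from the Minimum Dominating Set problem on cubic graphs, which is $NP$-complete by Theorem \ref{cubic}. Given a cubic instance $(G,d)$ with $n=|V(G)|$, I build $(G',d')$ by substituting for each vertex $v$ a gadget $H_v$ with three \emph{corners} (one per incident edge), and, for each edge $uv$ of $G$, joining the corner of $H_v$ assigned to $uv$ to the corner of $H_u$ assigned to $uv$ by a single edge, exactly as in Theorem \ref{cubic}. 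Since $G$ is cubic each $H_v$ uses its three corners and each corner receives one external edge, so every corner has degree $3$ and $G'$ is subcubic.

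The gadget is the crux. I would assemble $H_v$ from triangles only: a central triangle from whose three vertices three \emph{arms} emanate, each arm being a triangular snake (triangles strung along a path) ending in a corner, and the corner being the tip of the last triangle of its arm, carrying the external connecting edge. Such a graph is claw-free, because every vertex of degree $3$ is a vertex of a triangle and hence has two mutually adjacent neighbours; it is subcubic; and, being cactus-like with all cycles triangular, its only short induced cycles inside $H_v$ are triangles. Choosing the arm length larger than $k$ (and with a prescribed residue modulo $3$) guarantees that any cycle using a connecting edge must traverse at least two long arms, hence has length exceeding $k$, so $G'$ is $(C_4,\ldots,C_k)$-free. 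The domination lemma I must then prove is the exact analogue of the three properties of $H_v$ in Theorem \ref{cubic}: there is a constant $M$ with $\gamma(H_v)=M$, realised by a minimum dominating set containing all three corners; for each corner $a$, once $a$ is dominated from the outside one has $\gamma=M-1$, realised by an (essentially unique) minimum dominating set that contains no corner; and $H_v$ can never be dominated by fewer than $M-1$ vertices. Tuning the arm lengths modulo $3$ is what forces the saving to be exactly one, just as the domination number of a path jumps by one according to its length modulo $3$.

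Granting this lemma, the equivalence follows the bookkeeping of Theorem \ref{cubic}. From a dominating set $\Gamma$ of $G$ with $|\Gamma|\le d$, I place the full set of $M$ vertices in $H_v$ when $v\in\Gamma$ and the $(M-1)$-vertex helped set in $H_v$ when $v\notin\Gamma$, its single missing corner being dominated across the connecting edge by a neighbour $u\in\Gamma$ whose gadget is full; this yields $|\Gamma'|=(M-1)n+|\Gamma|\le (M-1)n+d=:d'$. Conversely, any dominating set of $G'$ of size at most $d'$ can be pushed gadget by gadget onto one of these two canonical configurations without increasing its size, since each $H_v$ costs at least $M-1$ and costs $M$ unless one corner is dominated from the outside, while a gadget costing only $M-1$ contains no corner and so can help no one. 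Hence every helped gadget is helped by a neighbour whose gadget is full, the vertices $v$ whose gadget is full form a dominating set $F$ of $G$, and $(M-1)n+|F|\le|\Gamma'|\le (M-1)n+d$ gives $\gamma(G)\le|F|\le d$.

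I expect the main obstacle to be the construction and verification of the gadget, because three requirements pull against one another. First, a genuine domination asymmetry is essential: external help must save \emph{exactly} one vertex, for otherwise $\gamma(G')$ degenerates to a constant independent of $\gamma(G)$ and the reduction collapses (this is what rules out the naive ``one triangle per vertex'' gadget). Second, the forbidden cycles $C_4,\ldots,C_k$ force the arms to be long, whereas asymmetry is easiest to obtain with small gadgets; the triangular-snake design is meant to reconcile the two, but proving the precise values $M$ and $M-1$, and in particular the lower bound that $M-1$ vertices never suffice when no corner is helped, is the technical heart, exactly as the analysis of the $44$-vertex gadget was in Theorem \ref{cubic}. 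Finally, I must check that the single-edge connectors, which are needed so that help travels the distance one that domination requires, never create a cycle of length between $4$ and $k$; this is precisely why the corners must be placed far apart inside each gadget.
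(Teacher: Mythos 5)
Your overall strategy---reduce from a degree-bounded MDS instance, replace each vertex by a corner gadget whose only short induced cycles are triangles, stretch the gadget so that every other cycle has length greater than $k$, and keep the ``exactly one vertex saved when a corner is helped'' accounting---is the paper's strategy. (The paper reduces from $4$-regular rather than cubic graphs, so its gadget has four corners, and the gadget is a fixed small base with induced paths of $3p$ vertices spliced in, giving $\gamma(H_v)=8+4p$ full versus $7+4p$ helped; these are minor differences, and your bookkeeping paragraph matches the paper's.)

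The genuine gap is the gadget itself: an arm built as a chain of tip-linked triangles, with the corner at the tip of the last triangle, has \emph{no} domination asymmetry, so the reduction collapses into precisely the degenerate case you warn against. In such a chain every triangle $\{a_i,b_i,c_i\}$ contains a private vertex $b_i$ of degree $2$ whose closed neighbourhood is $\{a_i,b_i,c_i\}$, entirely inside that triangle, and these closed neighbourhoods are pairwise disjoint; hence any dominating set must take at least one vertex per triangle whether or not the corner is dominated from outside, and one vertex per triangle always suffices. So the cost of an arm equals its number of triangles unconditionally, external help at a triangle-tip corner saves nothing, there is no $(M-1)$-configuration, and $\gamma(G')$ becomes a constant independent of $\gamma(G)$. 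The ``modulo $3$'' jump you invoke is a property of \emph{paths}, not of triangle chains, and cannot be transplanted to them. The repair is what the paper actually does: the interior vertices of a plain induced path have degree $2$ and therefore cannot be the centre of a claw, so claw-freeness does not force the long stretches to be made of triangles. One starts from a small base gadget that already has the $M$ versus $M-1$ asymmetry and grafts induced paths of $3p$ vertices into it; each such path contributes exactly $p$ to both the full and the helped count, preserving the gap of one while pushing every non-triangle induced cycle to length greater than $12p\ge k$.
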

\begin{proof}
The arguments are similar of those given in the proof of Theorem \ref{cubic}.

We give a polynomial reduction from the Minimum Dominating Set problem which is $NP$-complete for $4$-regular graphs. To each $4$-vertex $v$ is associated the gadget depicted in Figure \ref{bigck}. In this gadget each dashed box corresponds to an induced path of $3p$ vertices, $p \geq 0$.
\begin{figure}[htbp]
\begin{center}
\includegraphics[width=14cm, height=7cm, keepaspectratio=true]{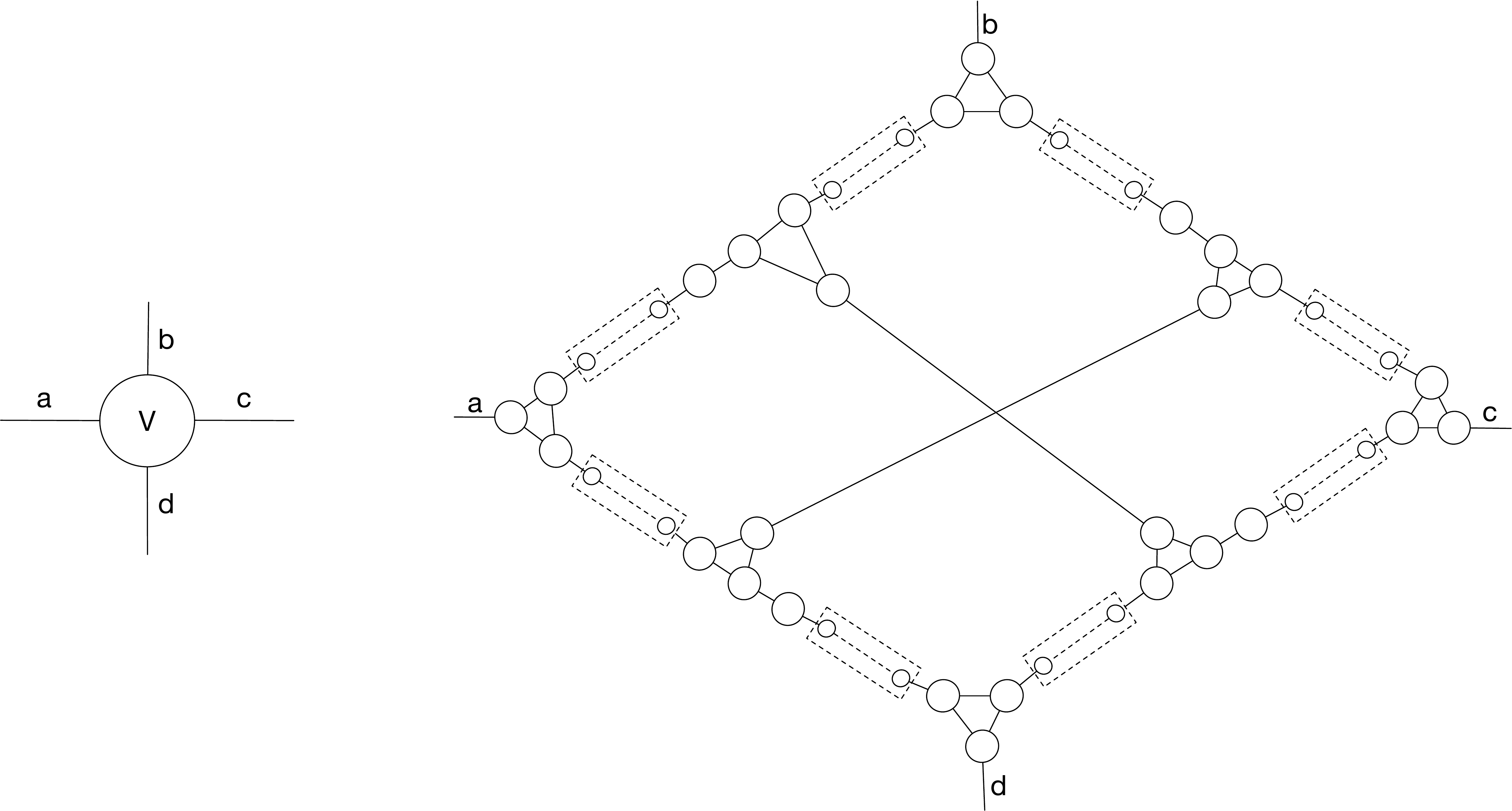}
\end{center}
\caption{A $4$-vertex $v$ and its associated gadget $H_v$.}
\label{bigck}
\end{figure}

We have the following: $\gamma(H_v)=8+4p$ and there exists a minimum dominating set that contains the four corners. For each corner, says $a$, $H_v-a $ has a unique minimum dominating set of size $\gamma(H_v-a)=7+4p$. None of the three other corners $b,c,d$ are in this minimum dominating set. With $6+4p$ vertices it is not possible to dominate $H_v-\{a,b,c,d\}$.

The graph we obtain has no claw. Its maximum degree is three. The smallest cycle that is not a triangle has length more than $12p$. As $p$ can be chosen arbitrary large taking $k=12p$ we obtain the result.
\end{proof}

\section{$(claw,k-double-triangle)$-free graphs}
Using the Theorem \ref{cubic}, we show the $NP$-completeness of the MDS problem for the class of $(claw,\ k-double-triangle)$-free subcubic graphs. In a first step we prove the following lemma.
\begin{lemma}\label{butterfly}
The $(claw, butterfly,diamond,C_4,C_5,K_4)$-free Minimum Dominating Set problem is $NP$-complete for cubic perfect graphs.
\end{lemma}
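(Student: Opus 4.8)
The plan is to give a polynomial reduction from the Minimum Dominating Set problem on cubic graphs, shown $NP$-complete in Theorem~\ref{cubic}, following the gadget-and-counting template of that proof but with a gadget engineered so that the constructed graph $G'$ is claw-free. The guiding principle is structural: I want every vertex of $G'$ to lie in exactly one triangle, with all triangles pairwise vertex-disjoint and joined only by single \emph{bridge} edges. Such a graph is precisely a triangular inflation of an auxiliary cubic graph $B$, each triangle playing the role of a vertex of $B$ and each bridge the role of an edge. Concretely, each vertex of the cubic instance $G$ is replaced by a gadget $H_v$ assembled purely from triangles and bridges, mirroring the way each $4$-vertex was replaced by a $44$-vertex gadget in Theorem~\ref{cubic}, but now the gadget (unlike the one there) is itself claw-free.

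The reason for enforcing this shape is that it yields the whole forbidden-subgraph list and perfection almost for free, which I would isolate as the first step. Since each vertex has two of its three neighbours adjacent (its two triangle-mates), $G'$ is claw-free; since the triangles are vertex-disjoint it is butterfly-free, and since they are edge-disjoint it is diamond-free and $K_4$-free. The key observation is that in such a graph every induced cycle of length at least four must alternate between internal (triangle) edges and external (bridge) edges: a hole vertex cannot use two internal hole-edges (the third triangle vertex would be a chord) nor two external ones (it is incident to a single bridge), so it carries exactly one internal and one external hole-edge. Hence every hole has \emph{even} length, giving $C_5$-freeness; and since $\overline{C_{2k+1}}$ is $(2k-2)$-regular for $k\ge 3$ (so cannot be induced in a subcubic graph) while $\overline{C_5}=C_5$ is already excluded as a hole, $G'$ is perfect. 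A hole of length four would force two distinct bridges between the same pair of triangles, and a triangle at the port level would create an external triangle sharing one vertex with an incident $T_v$ (a butterfly); so $C_4$- and butterfly-freeness amount exactly to the bridges forming a \emph{simple, triangle-free} port graph $B$, a condition I arrange when gluing the gadgets.

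Second step is the domination bookkeeping, which I expect to be verbatim the argument of Theorem~\ref{cubic} once the local facts are established. I need the three gadget properties that drive the reduction: a target value $\gamma(H_v)$ attained by a minimum dominating set containing all four corners; for each corner $a$ a reduced value $\gamma(H_v-a)$ one smaller, realised by a unique set avoiding the other three corners; and a lower bound showing the core $H_v-\{a,b,c,d\}$ cannot be dominated below the corresponding threshold. Setting $d'=c\cdot n+d$ for the constant $c$ read off the gadget, the forward direction sends $v\in\Gamma$ to the full gadget domination and $v\notin\Gamma$ to a corner-reduced one dominated through a shared corner of a neighbouring gadget, while the converse counts the $d$ gadgets forced to pay the extra vertex.

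The easy half is thus the structural checklist: claw, butterfly, diamond, $K_4$, $C_4$, $C_5$ and perfection all follow uniformly from the inflation shape together with simplicity and triangle-freeness of the port graph, so no case analysis of excluded subgraphs is needed. The main obstacle will be designing a triangle-only gadget that simultaneously (i) is cubic with four symmetric corners, (ii) exhibits the clean ``$+1$ per selected corner'' domination gap with the unique reduced solutions described above, and (iii) can be interconnected without ever duplicating a bridge between two triangles or closing a triangle at the port level. Verifying the exact values $\gamma(H_v)$, $\gamma(H_v-a)$ and the core lower bound for this new, all-triangle gadget — which is genuinely different from the gadget of Theorem~\ref{cubic} because the latter is not claw-free — is where essentially all of the effort will go.
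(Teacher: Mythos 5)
Your overall strategy coincides with the paper's: a reduction from MDS on cubic graphs (Theorem~\ref{cubic}) in which each vertex is replaced by a claw-free gadget built from vertex-disjoint triangles joined by bridge edges, followed by the same corner/counting bookkeeping. Your structural observation is in fact a nice systematization of what the paper only asserts by inspection: in a cubic graph, claw-freeness forces every vertex into a triangle, and $(K_4, diamond, butterfly)$-freeness forces these triangles to be vertex-disjoint with the bridges forming a perfect matching, whence every hole alternates triangle-edges and bridge-edges and has even length, killing $C_5$ and all odd holes; odd antiholes are excluded by the degree bound. That part of your argument is sound and arguably cleaner than the paper's.

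However, there is a genuine gap: you never exhibit the gadget, and you say yourself that ``essentially all of the effort'' lies in constructing it and verifying $\gamma(H_v)$, $\gamma(H_v-a)$ and the core lower bound. Those verifications \emph{are} the proof; without a concrete gadget the argument establishes only that \emph{if} such a gadget exists the lemma follows. The paper supplies one explicitly: a $9$-vertex graph made of three triangles, one corner per triangle, with $\gamma(H_v)=3$ (attained by the three corners), $\gamma(H_v-a)=2$ with a unique minimum dominating set avoiding $b$ and $c$, and no single vertex dominating $H_v-\{a,b,c\}$, giving $d'=2n+d$. Moreover your description of the missing gadget is internally inconsistent with your chosen source problem: you reduce from \emph{cubic} graphs, so each gadget must have exactly three corners (one per incident edge), yet you repeatedly ask for four symmetric corners and a core $H_v-\{a,b,c,d\}$, copying the template of Theorem~\ref{cubic} whose source instances are $4$-regular. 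With four corners of degree $3$ ($2$ triangle-mates plus one external bridge each) the construction would not encode a cubic instance at all. Fixing the corner count and actually producing and checking the three-corner triangle gadget is required before the reduction goes through.
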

\begin{proof}
We give a polynomial reduction from the Minimum Dominating Set problem for cubic graphs which is proved $NP$-complete in Theorem \ref{cubic}.

Let $G$ be a cubic graph and $d$ be an instance of Minimum Dominating Set. We build $G'$ and $d'$ another instance of Minimum Dominating Set where $G'$ is $(claw,butterfly,\\ diamond,C_4,C_5,K_4)$-free and perfect. Let $n$ be the number of vertices in $G$, we take $d'=2n+d$. Each $3$-vertex $v$ of $G$ is replaced with the gadget shown in Figure \ref{gadbutter}. We denote by $H_v$ the gadget associated with $v$.

The gadget $H_v$ has three {\it corners} - the vertices $a,b,c$ in Figure \ref{gadbutter} - each of them corresponding to an edge incident to $v$ in $G$. To the edge $a=uv$ in $G$ corresponds an edge connecting the two corners $a$ of $H_v$ and $H_u$. Hence $G'$ is $(claw$, $butterfly$, $diamond$, $C_4,C_5,K_4)$-free. Moreover $G'$ has no odd hole. Since $C_5$ is isomorphic to $\overline{C_5}$ we have $C_5\not\subseteq_i \overline{G'}$. One can remark that for any $k, k\ge 6,$ we have $C_4\subseteq_i {\overline{C_k}}$. Thus $G'$ being $C_4$-free, $G'$ has no odd anti-hole. Then from the perfect graph theorem \cite{perfect} $G'$ is perfect.

For each $H_v$, the subgraph of $G'$ induced by the nine vertices of $H_v$, satisfies the following properties. By symmetry the three corners play the same role. Clearly $\gamma(H_v)=3$ and there exists a minimum dominating set that contains the three corners, see Figure \ref{buttergamma}. For each corner, says $a$, $H_v-a $ has a unique minimum dominating set of size $\gamma(H_v-a)=2$, see Figure \ref{buttergamma}. None of the two other corners $b,c$ are in this minimum dominating set. With one vertex it is not possible to dominate $H_v-\{a,b,c\}$.

\begin{figure}[htbp]
\begin{center}
\includegraphics[width=10cm, height=7cm, keepaspectratio=true]{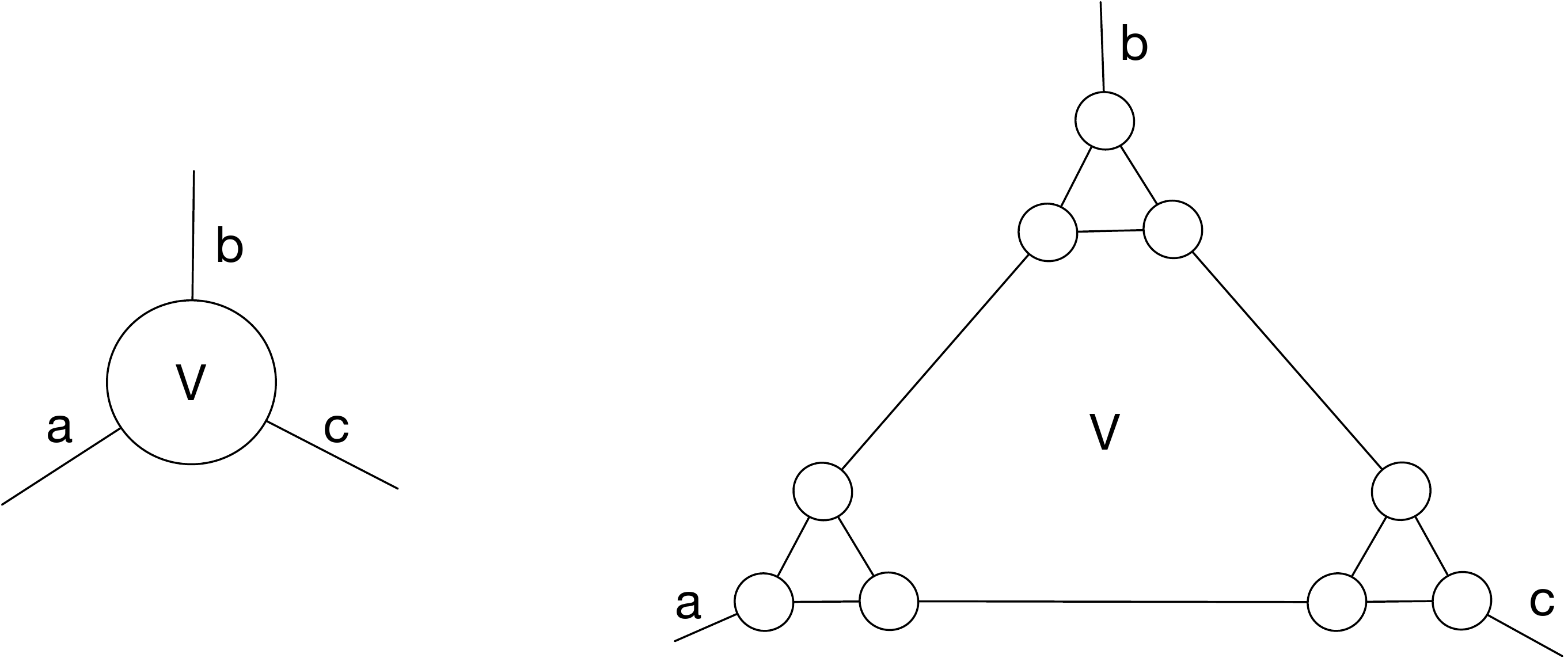}
\end{center}
\caption{A $3$-vertex $v$ and its associated gadget $H_v$.}
\label{gadbutter}
\end{figure}

\begin{figure}[htbp]
\begin{center}
\includegraphics[width=14cm, height=7cm, keepaspectratio=true]{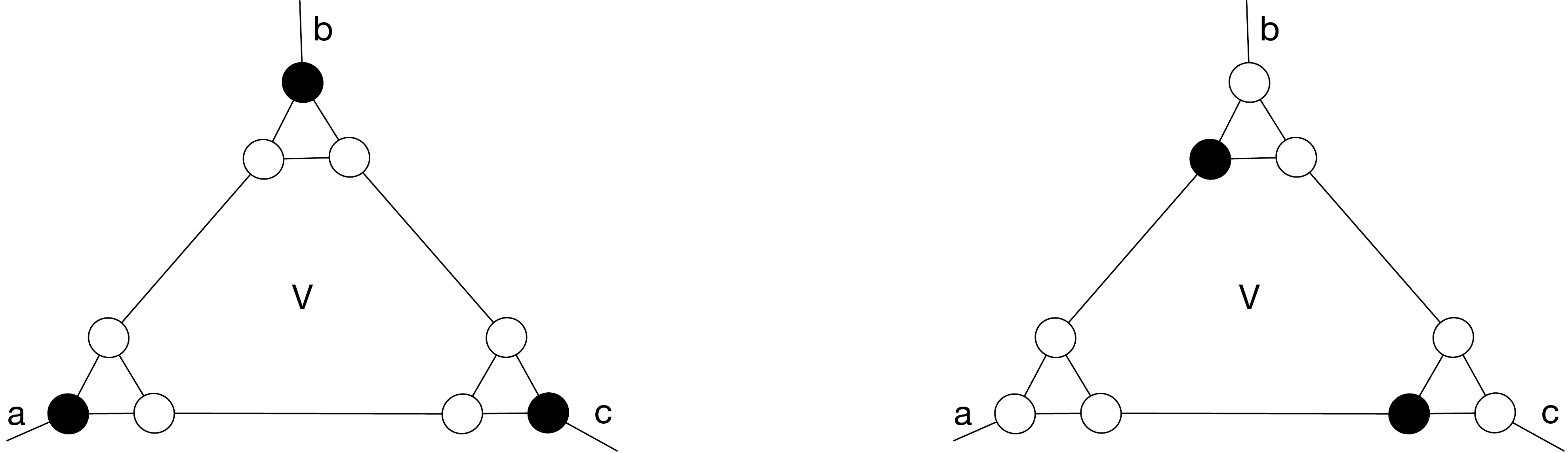}
\end{center}
\caption{A minimum dominating set and partial dominating set of size $2$ for the gadget}
\label{buttergamma}
\end{figure}
Let $\Gamma$ be a dominating set of $G$ such that $\vert \Gamma\vert\le d$. We give $\Gamma'$ a dominating set of $G'$ as follows: If $v\in \Gamma$ then the three vertices of the minimum dominating set of $H_v$ (the three black vertices of Figure \ref{buttergamma}), are put in $\Gamma'$. If $v\not\in \Gamma$ then $v$ is dominated by a vertex $w, w\in \Gamma$. Let $a=vw$. The two vertices of the minimum dominating set of $H_v-a$ (the two black vertices of Figure \ref{buttergamma}), are put in $\Gamma'$.
Hence the corner $a$ of $H_v$ is dominated by the corner $a$ of $H_w$. So $\Gamma'$ is a dominating set of size $\vert \Gamma'\vert=2n+d=d'$.\\

Let $\Gamma'$ be a dominating set of $G'$ such that $\vert \Gamma\vert\le 2n+d=d'$. We show that $\gamma(G)\le d$. From the last property we gave for $H_v$ at least two vertices are necessary to dominate $H_v-\{a,b,c\}$. Since $d'=2n+d'$ at most $d$ gadgets $H_v$ must contain at least three vertices. Since three vertices are enough for one $H_v$, we can assume there are $d$ gadgets with $\vert H_v\cap \Gamma'\vert=3$. Taking the $d$ corresponding vertices $v$ in $\Gamma$, $\Gamma$ is a dominating set of $G$. Hence we have that $\gamma(G)\le\vert\Gamma\vert\le d$.
\end{proof}

So we can prove the following theorem.
\begin{theorem}\label{kdoubletriangle}
For any fixed integer $k,k\ge 0,$ the Minimum Dominating Set problem is $NP$-complete for $(claw,\ k-double-triangle)$-free subcubic graphs.
\end{theorem}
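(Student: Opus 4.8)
The plan is to reduce from the graph produced by Lemma~\ref{butterfly}. Let $G'$ be the cubic graph obtained there from a cubic instance $(G,d)$ of \textsc{Minimum Dominating Set}, so that $\gamma(G)\le d$ if and only if $\gamma(G')\le 2n+d$, where $n=|V(G)|$. The crucial feature I would exploit is that $G'$ is $(butterfly,diamond,K_4)$-free, which forces all its triangles to be pairwise vertex-disjoint: no two triangles share a vertex or an edge. Consequently every edge of $G'$ either lies inside a (unique) triangle or lies in no triangle at all; I will call the latter the \emph{link edges}, and I note that any path joining two distinct triangles must begin by traversing a link edge.

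Fix $k$ and pick an integer $t$ with $t\ge 1$ and $3t\ge k+1$. I would form $G''$ from $G'$ by subdividing every link edge exactly $3t$ times, i.e.\ replacing it by an induced path on $3t$ new degree-two vertices; triangle edges are left untouched. Three structural checks then have to be made. First, $G''$ is subcubic, since subdivision never raises a degree. Second, $G''$ stays claw-free: the inserted vertices have degree two and so cannot centre a claw, while for a subdivided endpoint $x$ of degree three with neighbours $y,p,q$ (where $xy$ is the subdivided link edge), claw-freeness of $G'$ together with the fact that $xy$ lies in no triangle forces $pq\in E$, and this saving edge survives the subdivision. Third, and most importantly for the target class, the triangles of $G''$ are exactly the triangles of $G'$ -- subdividing link edges neither creates nor destroys a triangle -- so they remain pairwise vertex-disjoint, and any two of them now lie at distance at least $3t+1>k+1$ (a path leaving a triangle must traverse a fully subdivided link edge). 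Hence no induced path on $k+2$ vertices can join two triangles, and $G''$ contains no induced $k$-double-triangle.

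It remains to control the domination number, and this is where the real bookkeeping lies. I would use the folklore identity that subdividing a single edge three times increases the domination number by exactly one: the middle vertex of the new triple must be dominated from within the triple, which gives the lower bound, while a short case analysis on whether the endpoints were dominated through the original edge gives the matching upper bound (the original edge being present in $G'$ is exactly what makes the endpoint reattachment free). Applying this identity $t$ times on each of the $L$ link edges and summing yields $\gamma(G'')=\gamma(G')+Lt$. Setting $d''=2n+d+Lt$ then gives $\gamma(G)\le d \iff \gamma(G')\le 2n+d \iff \gamma(G'')\le d''$. For fixed $k$ the parameter $t$ is a constant and $L\le|E(G')|$, so $G''$ and $d''$ are computable in polynomial time, and membership in NP is immediate; this establishes the claimed reduction.

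The step I expect to be the main obstacle is the domination accounting: proving the three-subdivision identity cleanly and, above all, checking that it remains additive when many link edges are subdivided simultaneously, so that the single clean formula $\gamma(G'')=\gamma(G')+Lt$ is valid. Verifying the $k$-double-triangle-freeness is conceptually easy once one observes that the only triangles are the original, pairwise-disjoint ones, but it does rely on the separation bound $3t\ge k+1$, which is precisely why the subdivision length must be taken both a multiple of three (for the domination count) and at least $k+1$ (for the forbidden-subgraph condition).
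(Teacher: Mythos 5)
Your proposal is correct and is essentially the paper's own construction: the paper's gadget of Figure \ref{kdoubtriangle} with its dashed $P_{3p}$ boxes is exactly the graph obtained by subdividing each non-triangle (link) edge of the Lemma \ref{butterfly} graph $3p$ times, and your budget $d''=2n+d+Lt$ with $L=3|V|+|E|$ matches the paper's $d'=d+(2+3p)|V|+p|E|$. The only difference is presentational: you isolate and justify the three-subdivision identity $\gamma(G_{e,3})=\gamma(G)+1$ explicitly, whereas the paper delegates the domination accounting to a figure and the phrase ``the remaining arguments are the same.''
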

\begin{proof}
The arguments are similar of those given in the proof of Lemma \ref{butterfly}.

We give a polynomial reduction from the Minimum Dominating Set problem which is $NP$-complete for cubic graphs. Let $G=(V,E)$ be a cubic graph. To each $3$-vertex $v,v\in V,$ is associated the gadget depicted by Figure \ref{kdoubtriangle}. In this gadget each dashed box corresponds to an induced path of $3p$ vertices (we depicted the case $p=1$). To each edge $uv\in E$ is associated a $P_{3p}$ (the same dashed box in the picture). We take $d'=d+(2+3p)\vert V\vert+p\vert E\vert$.

\begin{figure}[htbp]
\begin{center}
\includegraphics[width=10cm, height=7cm, keepaspectratio=true]{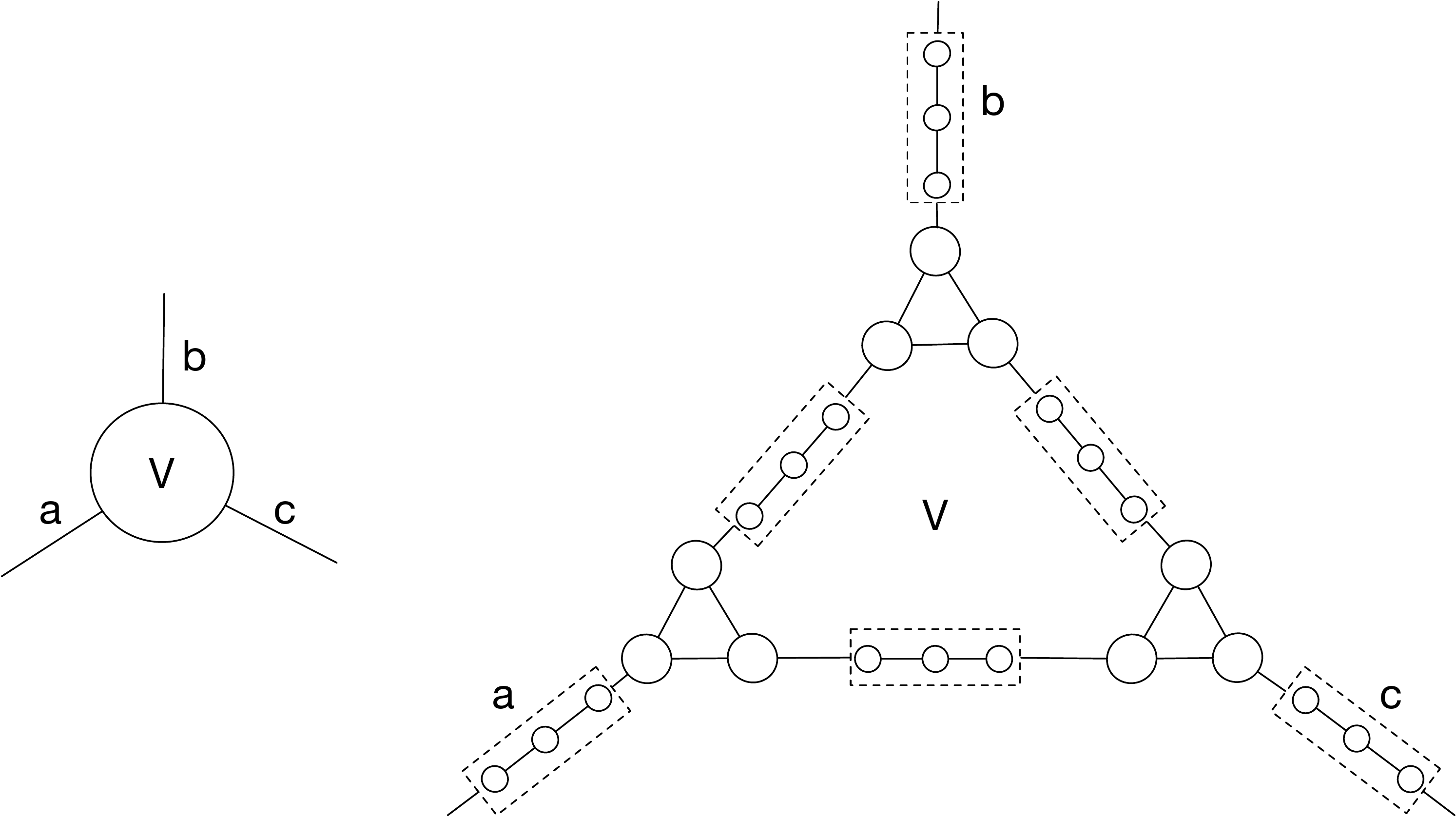}
\end{center}
\caption{A $3$-vertex $v$ and its associated gadget $H_v$.}
\label{kdoubtriangle}
\end{figure}

\begin{figure}[htbp]
\begin{center}
\includegraphics[width=14cm, height=7cm, keepaspectratio=true]{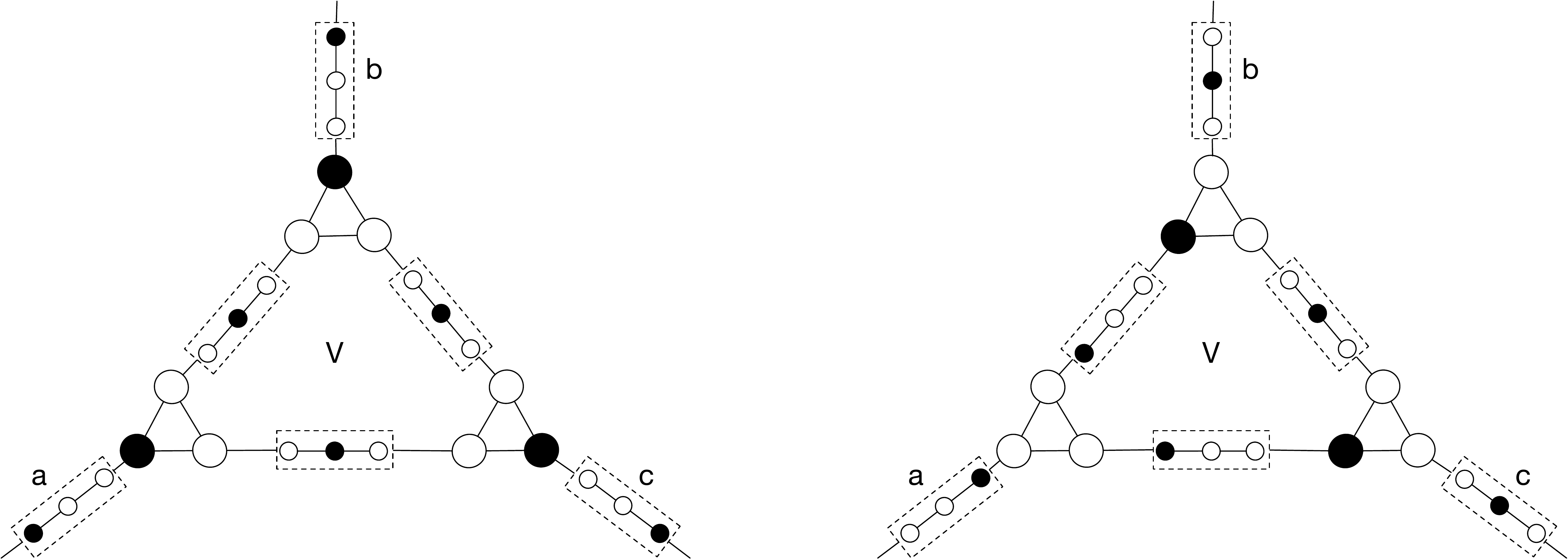}
\end{center}
\caption{On the left : the vertex $v$ is in the dominating set; on the right: the vertex $v$ is not in the dominating set.}
\label{kdoubgadj}
\end{figure}

The graph we obtain is $(claw,(3p-1)-double-triangle)$-free and its maximum degree is three. As shown by Figure \ref{kdoubgadj} the remaining arguments are the same as in the proof of Lemma \ref{butterfly}.

As $p$ is a positive integer that can be chosen arbitrary large we obtain the result.
\end{proof}

\section{$(claw,H)-free$ graphs}\label{clawH}
For any fixed graph $H$ with no more than six vertices we determine the complexity of the MDS problem for the class of $(claw,H)$-free graphs. Trivially the graphs $G$ we consider are assumed to be connected. When $H$ has no more than two vertices the MDS problem is trivially polynomial, so we prove the results for all the graphs $H$ with three vertices up to six vertices.

\begin{prop}\label{3vert}
For any fixed graph $H$ with three vertices computing a minimum dominating set for $G$ a connected $(claw,H)$-free graph is polynomial.
\end{prop}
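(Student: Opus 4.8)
The plan is to run through the four pairwise non-isomorphic graphs on three vertices, namely $3K_1$, $K_2+K_1$, $P_3$ and $K_3$, and to settle each one either by invoking an already established polynomial result or by a short structural observation on the class $(claw,H)$-free.

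The two ``reduction'' cases are immediate. For $H=3K_1$ I would simply apply Proposition~\ref{kK1} with $k=3$, which states that $(claw,3.K_1)$-free graphs admit a polynomial $\gamma$-set computation. For $H=K_2+K_1$ I would use that $K_2+K_1\subseteq_i K_2+2K_1$; the class of $(claw,K_2+K_1)$-free graphs is therefore a subclass of the class of $(claw,K_2+2K_1)$-free graphs, and since the latter is polynomial (as recalled in the preliminaries), Proposition~\ref{HinH'POLY} yields the result for $(claw,K_2+K_1)$-free graphs.

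The two remaining cases are handled by structure. For $H=P_3$, I would use the standard fact that a $P_3$-free graph is a disjoint union of cliques; as $G$ is connected it is then a complete graph, whence $\gamma(G)=1$ and any single vertex is a minimum dominating set. For $H=K_3$, the key observation is that a $(claw,K_3)$-free graph has maximum degree at most two: if a vertex $v$ had three neighbours $x,y,z$, then to avoid an induced claw on $\{v,x,y,z\}$ at least one edge among $x,y,z$ must be present, say $xy$, and then $\{v,x,y\}$ induces a triangle, contradicting $K_3$-freeness. A connected graph with $\Delta\le 2$ is a path or a cycle, and for paths and cycles a $\gamma$-set is obtained in linear time (indeed $\gamma(P_n)=\gamma(C_n)=\lceil n/3\rceil$).

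There is no genuine obstacle here: the argument is a four-way case split whose only substantive ingredients are the two elementary structural facts used above — that a connected $P_3$-free graph is complete, and that $(claw,K_3)$-freeness forces $\Delta\le 2$. Combining the four cases shows that computing a minimum dominating set is polynomial for every connected $(claw,H)$-free graph with $H$ on three vertices.
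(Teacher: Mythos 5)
Your proof is correct, and it covers all four three-vertex graphs ($3K_1$, $K_2+K_1$, $P_3$, $K_3$) with sound arguments. The route differs from the paper's in how the non-$C_3$ cases are dispatched: the paper observes that every $H\ne C_3$ on three vertices satisfies $H\subseteq_i P_8$ and invokes the $(claw,P_8)$-free polynomial result of \cite{clawpk} once, via Property \ref{HinH'POLY}, while you treat each case individually --- Proposition \ref{kK1} for $3K_1$, containment in $K_2+2K_1$ for $K_2+K_1$, and the disjoint-union-of-cliques structure for $P_3$. Your $K_3$ case coincides with the paper's (connected $(claw,K_3)$-free graphs are paths or cycles), though you spell out the $\Delta\le 2$ argument that the paper leaves implicit. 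What your version buys is that two of the three remaining cases ($3K_1$ and $P_3$) no longer depend on the comparatively heavy $(claw,P_8)$-free result, which is cited only as a manuscript; the $P_3$ case in particular reduces to $\gamma(G)=1$. What it costs is brevity, and note that your $K_2+K_1$ case still routes through $K_2+2K_1\subseteq_i P_8$ and hence through the same external result, so the dependency on \cite{clawpk} is not eliminated --- only narrowed to one of the four cases.
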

\begin{proof}
Since a connected $(claw, C_3)$-free graph is either a path or a cycle, the MDS problem is polynomial when $G$ is $(claw, H)$-free with $H=C_3$. When $H\ne C_3$ we have $H\subseteq_i P_8$. From \cite{clawpk} we know that computing a mds is polynomial for $(claw, P_8)$-free graphs. Then the result holds from Property \ref{HinH'POLY}.
\end{proof}

The next three sections deal with the case where $H$ contains four, five, six vertices.
\subsection{$(claw,H)-free$ graphs when $H$ has four vertices}
When $H$ has exactly four vertices the state of the art is the following.
In \cite{Yannakakis}, the minimum edge dominating set problem is shown to be $NP$-complete for bipartite subcubic graphs. Moreover, one can check that the graphs $G$ built in the transformation (from a variant of the $3$-SAT problem) are $C_4$-free. Also, line graphs of bipartite graphs are perfect, so they have no odd holes. Thus, $L(G)$ the line graph of such a $G$ is $(claw, diamond, K_4,C_4)$-free and perfect (i.e., $(odd\ hole, odd\ antihole)$-free). It follows that the minimum dominating set problem is $NP$-complete for $(claw, diamond, K_4,C _4)$-free perfect graphs. From \cite{clawpk} computing a mds is polynomial for $(claw, P_k)$-free graphs when $k\le 8$. It is also polynomial when $H\in\{P_4, P_3+K_1,2K_2,K_2+2K_1,4K_1\}$ since $H\subseteq_i P_8$. The class of $(claw, \overline {claw})$-free graphs has bounded clique-width \cite{clawcoclaw}, so from \cite{cliquewidth} a $\gamma$-set can be computed in linear time (remark that $\overline {claw}\equiv K_3+K_1)$.\\

Taking these results together we obtain the following dichotomy.
\begin{prop}\label{4vert}
Let $H$ be a fixed graph with four vertices. Computing a minimum dominating set for $G$ a connected $(claw,H)$-free graph is $NP$-complete when $H\in\{diamond, K_4,C _4\}$, otherwise it is polynomial.
\end{prop}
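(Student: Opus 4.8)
The plan is to prove the dichotomy by a finite case analysis over all graphs $H$ on four vertices. Since $(claw,H)$-free coincides with the class of claw-free graphs whenever $H$ contains an induced claw, and among the four-vertex graphs only the claw itself contains an induced claw, it suffices to decide the complexity for the ten claw-free four-vertex graphs: $4K_1$, $K_2+2K_1$, $2K_2$, $P_3+K_1$, $P_4$, $\overline{claw}=K_3+K_1$, the paw, $C_4$, the diamond, and $K_4$. For each one I must decide whether $(claw,H)$-free MDS is $NP$-complete or polynomial, and the statement claims the only hard cases are $diamond$, $K_4$ and $C_4$.

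For the three hard cases I would argue as follows. The preliminary discussion already establishes that MDS is $NP$-complete on $(claw,diamond,K_4,C_4)$-free graphs. Every graph in that class is at the same time $(claw,diamond)$-free, $(claw,K_4)$-free and $(claw,C_4)$-free, and $NP$-hardness on a subclass immediately transfers to every superclass; since membership in NP is clear, this single construction settles $H\in\{diamond,K_4,C_4\}$ simultaneously. Formally this is the content of Property \ref{HinH'NPC}.

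On the polynomial side, six of the seven remaining graphs are covered by results quoted earlier: $P_4,P_3+K_1,2K_2,K_2+2K_1,4K_1$ are each induced subgraphs of $P_8$, so Property \ref{HinH'POLY} together with the $(claw,P_8)$-free algorithm of \cite{clawpk} handles them, while $\overline{claw}=K_3+K_1$ follows from the bounded clique-width of $(claw,\overline{claw})$-free graphs \cite{clawcoclaw,cliquewidth}. The only case that needs a new argument, and the one I expect to be the main obstacle, is the paw, which is neither an induced subgraph of $P_8$ nor equal to $\overline{claw}$.

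For the paw I would rely on Olariu's structure theorem, which states that a graph is paw-free if and only if each of its connected components is triangle-free or complete multipartite. As $G$ is connected, exactly one of two situations occurs. If $G$ is complete multipartite, then it has a universal vertex whenever some part is a singleton, giving $\gamma(G)=1$, while if all parts have size at least two then any two vertices lying in distinct parts dominate $G$, giving $\gamma(G)\le 2$; either way Property \ref{V=N[T]} yields a polynomial algorithm. If instead $G$ is triangle-free, then claw-freeness forces maximum degree at most two, since a vertex of degree three would have three pairwise non-adjacent neighbours and hence induce a claw, so $G$ is a path or a cycle, for which a minimum dominating set is computable in linear time. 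Collecting the three hard cases and the seven polynomial ones gives the stated dichotomy.
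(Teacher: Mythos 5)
Your proposal is correct and, for nine of the ten claw-free four-vertex graphs, follows exactly the paper's route: the single $(claw,diamond,K_4,C_4)$-free hardness construction settles the three hard cases, the five graphs that embed in $P_8$ are dispatched by the $(claw,P_8)$-free algorithm, and $K_3+K_1=\overline{claw}$ by bounded clique-width. Where you genuinely diverge is the paw. The paper's four-vertex subsection does not mention the paw explicitly at all; it is only covered implicitly because the preliminaries record that $(claw,net)$-free MDS is polynomial and $paw\subseteq_i net$, so Property \ref{HinH'POLY} applies. You instead give a self-contained argument via Olariu's characterization of paw-free graphs (every component is triangle-free or complete multipartite), observing that a connected complete multipartite graph has $\gamma\le 2$ so Property \ref{V=N[T]} applies, while a connected triangle-free claw-free graph has maximum degree two and is a path or cycle. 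Both arguments are valid; yours is more elementary and avoids invoking the Brandst\"adt--Dragan $(claw,net)$-free machinery, at the cost of citing Olariu's theorem, which the paper does not use elsewhere. Your treatment also has the merit of making explicit that the paw is the one case not covered by the $P_8$ and $\overline{claw}$ results, a point the paper leaves for the reader to notice. One minor remark: like the paper, you implicitly adopt the convention that $H$ is claw-free, since for $H=claw$ the statement's ``otherwise polynomial'' clause would fail; it is worth saying this explicitly rather than silently restricting to the ten claw-free graphs.
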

\begin{rmk}
In \cite{indcomp} Lin et al. remark that the minimum dominating set problem is $NP$-complete for $(claw, diamond, K_4,C _4)$-free graphs, here we restrict this result for the class of perfect graphs.
\end{rmk}

\subsection{$(claw,H)-free$ graphs when $H$ has five vertices}
In this section, we focus on $H$ with exactly five vertices. From \cite{Yannakakis}, we know that the MDS problem is $NP$-complete when $G$ is $claw$-free and perfect, hence for $(claw,C_5)$-free graphs. Also, from Lemmas \ref{HinH'NPC} and \ref{butterfly}, we know that the MDS problem is $NP$-complete for $(claw, H)$-free graphs when $H$ contains a $C_4$, a $K_4$, a $diamond$, or a $butterfly$.

Using Property \ref{HinH'NPC} the MDS problem is NP-complete when $H\in \{K_5,K_5-e,\overline{P_3 +2K_1},\\ W_4,\overline{claw + K_1},\overline{P_2+ P_3},gem,\overline{K_3 + 2K_1},K_4 + K_1,C_4 + K_1,dart,house,diamond + K_1\}$.
From Property \ref{HinH'POLY}, it is polynomial for $(claw, H)$-free graphs when $H\subseteq_i net$ or $H \subseteq_i P_7$, and from Property \ref{kK1} it is polynomial for $(claw, k.K_1)$-free graphs.
Hence, the MDS problem is polynomial for $H\in \{bull,P_5,P_3+ 2K_1,2K_2+ K_1,K_2+ 3K_1,K_2 + P_3,P_4+ K_1,5K_1\}$.

It remains to treat the cases where $H\in \{paw+ K_1,\{2,0,0\}-triangle,K_3+ 2K_1,K_3+ K_2\}$.

\begin{prop}\label{K32K1}
Let $G=(V,E)$ be a connected $(claw,H)$-free graph. If $H=K_3 + 2K_1$ then computing a minimum dominating set for $G$ is polynomial.
\end{prop}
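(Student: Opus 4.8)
The plan is to show that $\gamma(G)$ is bounded by an absolute constant and then invoke Proposition \ref{V=N[T]} directly. I would split the argument according to whether $G$ contains a triangle.

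First, suppose $G$ is $C_3$-free. Then $G$ is a connected $(claw,C_3)$-free graph, and as already observed in the proof of Proposition \ref{3vert}, such a graph is a path or a cycle, so the MDS problem is trivially polynomial. (A vertex of degree at least three in a triangle-free graph would have three pairwise non-adjacent neighbours forming a claw, so the maximum degree is at most two, forcing a path or a cycle once $G$ is connected.)

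Second, suppose $G$ contains a triangle $T=\{x,y,z\}$. The key observation is that $Q=V\setminus N[T]$ induces a clique. Indeed, if $u,v\in Q$ were distinct and non-adjacent, then each of $u,v$ is non-adjacent to each of $x,y,z$ (they lie outside $N[T]$) and $u,v$ are non-adjacent to each other, so $G[\{x,y,z,u,v\}]$ would be an induced $K_3+2K_1$, contradicting the hypothesis. Hence any two vertices of $Q$ are adjacent. Now I claim $\gamma(G)\le 4$: the three vertices $x,y,z$ dominate $N[T]=N[x]\cup N[y]\cup N[z]$ by definition, and if $Q\ne\emptyset$ then picking any $q\in Q$ dominates all of the clique $Q$, so $\{x,y,z,q\}$ dominates $V=N[T]\cup Q$ (and $\{x,y,z\}$ suffices when $Q=\emptyset$). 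In either case there is an induced set $T'$ with $|T'|\le 4$ and $V=N[T']$, so Proposition \ref{V=N[T]} with $k=4$ yields a minimum dominating set in time $O(n^4)$.

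The only real content is the clique observation, which is an immediate consequence of forbidding $K_3+2K_1$ once a triangle is available; the rest is bookkeeping, so I expect no serious obstacle. An alternative route, should one prefer Proposition \ref{kK1}, is to bound the independence number instead: claw-freeness gives $\alpha(G[N[w]])\le 2$ for every vertex $w$, hence $\alpha(G[N[T]])\le 6$, and adding the clique $Q$ gives $\alpha(G)\le 7$, so $G$ is $8.K_1$-free and Proposition \ref{kK1} applies with $k=8$. The domination bound above is more direct, so I would present that as the main argument.
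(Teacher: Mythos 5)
Your proof is correct and follows essentially the same route as the paper: handle the triangle-free case separately, then observe that for a triangle $T$ the set $V\setminus N[T]$ must induce a clique (else an induced $K_3+2K_1$ appears), giving $\gamma(G)\le 4$ and an $O(n^4)$ brute-force search. The alternative bound on $\alpha(G)$ via Proposition \ref{kK1} is a valid extra, but the main argument matches the paper's.
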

\begin{proof}
 When $G$ is a $(claw,K_3)$-free graph computing a mds is polynomial. Now we suppose that $K_3\subseteq_i G$. Let $G[\{v_1,v_2,v_3\}]=K_3$. From Property \ref{V=N[T]} we can assume that $N[K_3] \neq V$. Let $W=V-N[T]\ne\emptyset$. If $W$ is not complete then there are $u,v\in W$ such that $uv\not \in E$, but $K_3\cup \{u,v\}=H\subseteq_i G$. Thus $W$ is complete and $\gamma(G)\le 4$ (taking $\{v_1,v_2,v_3\}$ and $w\in W$). Hence a mds can be computed in time $O(n^4)$.\end{proof}

\begin{prop}\label{K3K2copan} 
Let $G=(V,E)$ be a connected $(claw,H)$-free graph. If $H\in\{K_3 + K_2,\{2,0,0\}-triangle,paw+K_1\}$ then computing a minimum dominating set for $G$ is polynomial.
\end{prop}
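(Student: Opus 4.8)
The plan is to follow the template of Proposition \ref{K32K1}: reduce to the case where $G$ contains a triangle, then exploit claw-freeness together with the specific forbidden graph $H$. Each of $K_3+K_2$, the $\{2,0,0\}$-triangle and $paw+K_1$ contains a triangle, so if $G$ is $C_3$-free it is $(claw,C_3)$-free, hence a path or a cycle, and the mds is computed in linear time. Thus I assume $G$ contains an induced triangle $T=\{v_1,v_2,v_3\}$. If $N[T]=V$ then $\gamma(G)\le 3$ and Property \ref{V=N[T]} settles the case, so I may assume $W=V\setminus N[T]\neq\emptyset$. Writing $A=N(T)\setminus T$, connectivity and the fact that $W$ is separated from $T$ by $A$ give that every vertex of $W$ has all its neighbours in $A$. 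Note also that $T$ alone dominates $N[T]$, so the only real difficulty is to dominate $W$ economically.

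For $paw+K_1$ the argument is self-contained and does not even need the triangle decomposition. If $G$ contains an induced $paw$ $R$ (four vertices), then $(paw+K_1)$-freeness forbids any vertex non-adjacent to all of $R$, hence $N[R]=V$ and $\gamma(G)\le 4$ by Property \ref{V=N[T]}. Otherwise $G$ is $(claw,paw)$-free; using the known characterisation of $paw$-free graphs (each connected component is triangle-free or complete multipartite), the connected graph $G$ is either triangle-free, hence (being claw-free) a path or a cycle, or complete multipartite with all parts of size at most two, in which case any two vertices in distinct parts dominate $G$ and $\gamma(G)\le 2$. In every case the mds is polynomial.

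For the $\{2,0,0\}$-triangle and for $K_3+K_2$ I would first pin down how $W$ attaches to $A$. In both cases, claw-freeness applied at a vertex $a\in A$ (adjacent to some $v_i$) together with two non-adjacent neighbours $w_1,w_2\in W$ produces the claw $\{a;v_i,w_1,w_2\}$, so the neighbours of each $a\in A$ inside $W$ form a clique. For the $\{2,0,0\}$-triangle one adds that if $a$ is adjacent to exactly one vertex $v_i$ of $T$ and to some $w\in W$, then $T\cup\{a,w\}$ induces a $\{2,0,0\}$-triangle; hence only the vertices of $A$ adjacent to at least two vertices of $T$ can reach $W$. For $K_3+K_2$ one shows first that $W$ is stable (an edge inside $W$ together with $T$ would induce $K_3+K_2$), so that the $W$-neighbourhood of each $a$ is both a clique and stable, i.e.\ of size at most one, whence the sets $N(w)$, $w\in W$, are pairwise disjoint and no single vertex dominates two vertices of $W$.

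It remains to turn this structure into a polynomial algorithm, and this is where I expect the real work to lie. The idea is to use $\gamma(G)=i(G)$ for claw-free graphs and to look for a minimum independent dominating set: one guesses, among polynomially many configurations, how the bounded-radius core $G[N[T]]$ (which has $\gamma\le 3$) is dominated, and then resolves the vertices of $W$ by local choices, each $w$ being handled through its clique of private neighbours in $A$. The main obstacle is to prove that these two tasks genuinely decouple, so that an optimal global solution is obtained by combining a constant-size decision on the core with an independently optimal choice for the pendant part $W$ — in particular that the restricted $A$–$W$ adjacencies (disjoint neighbourhoods for $K_3+K_2$, clique-attachments through the $2$-dominating vertices for the $\{2,0,0\}$-triangle) prevent any long-range interference. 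Bounding the number of relevant core configurations and verifying this decoupling rigorously, separately for the stable $W$ of the $K_3+K_2$ case and the possibly-cliqued $W$ of the $\{2,0,0\}$-triangle case, is the step I would spend the most care on.
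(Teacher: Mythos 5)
Your treatment of $paw+K_1$ is complete and correct (and genuinely different from the paper: the structure theorem for $paw$-free graphs is not used there), but for the two remaining cases, $K_3+K_2$ and the $\{2,0,0\}$-triangle, you stop at structural observations and explicitly defer ``the real work'' --- turning the attachment structure of $W$ to $A$ into a polynomial algorithm and proving that the core and $W$ decouple. That deferred step is the entire content of the claim for those cases, so as written the proof has a genuine gap. There is also a flaw in the setup you would rely on: from $W=V\setminus N[T]$ and connectivity you assert that every vertex of $W$ has all its neighbours in $A=N(T)\setminus T$, but $W$ contains all vertices at distance at least $2$ from $T$, and such a vertex may have all its neighbours inside $W$ (this is rescued in the $K_3+K_2$ case only because $W$ turns out to be stable, and not at all in the $\{2,0,0\}$-triangle case). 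A constant-radius ball around a triangle simply does not control the whole graph here, which is why the decoupling you hope for is not automatic.

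The paper avoids all of this with one uniform observation that handles all three graphs at once: either $G$ is $(claw,P_8)$-free, in which case the MDS problem is already known to be polynomial \cite{clawpk}, or $G$ contains an induced path $P=v_1-\cdots-v_8$. If $N[P]\neq V$, then (connectivity plus claw-freeness) some $v\in N(P)$ with $N_P(v)=\{v_i,v_{i+1}\}$ has a neighbour $w\in V\setminus N[P]$, and the sets $\{v,v_i,v_{i+1},v_{i+3}\}$, $\{v,v_i,v_{i+1},v_{i+2},v_{i+3}\}$ and $\{v,v_i,v_{i+1},v_{i+3},v_{i+4}\}$ induce respectively $paw+K_1$, the $\{2,0,0\}$-triangle and $K_3+K_2$, a contradiction; hence $N[P]=V$, so $\gamma(G)\le 8$ and Property~\ref{V=N[T]} applies. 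If you want to salvage your triangle-based plan, you would need to replace the triangle by a structure whose closed neighbourhood provably covers $V$ (as the $P_8$ does), rather than attempt a decomposition whose correctness you have not established.
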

\begin{proof}
When $G$ is a $(claw,P_8)$-free graph computing a mds is polynomial (see \cite{clawpk}). Now we suppose that $P_8\subseteq_i G$. Let $v_1-v_2-v_3-v_4-v_5-v_6-v_6-v_7-v_8=P$. From Property \ref{V=N[T]} we can assume that $N[P] \neq V$. Let $W=V \setminus N[P]$. Since $G$ is connected and $claw$-free, there are $w \in W,v\in N(P)$ such that $v$ is a neighbor of $w$ and $v$ has exactly two neighbors $v_i, v_{i+1}$ in $P$. By symmetry we can assume that $i\leq 4$. Clearly $G[\{v,v_i,v_{i+1},v_{i+3}\}]=paw+K_1$, $G[\{v,v_i,v_{i+1}, v_{i+2},v_{i+3}\}]=\{2,0,0\}-triangle$ and $G[\{v,v_i,v_{i+1}, v_{i+3},v_{i+4}\}]=K_3+ K_2$.
\end{proof}

Taking the results together we obtain the following dichotomy.

\begin{theorem}\label{H5complexity}
 Let $H$ be a fixed graph with five vertices. Computing a minimum dominating set for $G$ a connected $(claw,H)$-free graph is $NP$-complete when $H \in \{C_5, K_5, K_5-e, \overline{P_3 + 2K_1}, W_4, \overline{claw + K_1}, \overline{P_2+ P_3},$ $gem, \overline{K_3 + 2K_1}, K_4 + K_1, C_4 + K_1, dart, house, diamond + K_1, butterfly\}$ and polynomial otherwise.
\end{theorem}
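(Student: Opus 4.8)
The plan is to establish the dichotomy by an exhaustive case analysis over all five-vertex graphs $H$, routing each one to a result already available in the excerpt. The first reduction is the observation made just before the statement: if $claw\subseteq_i H$ then every $claw$-free graph is automatically $H$-free, so $(claw,H)$-free coincides with the class of $claw$-free graphs, for which the MDS problem is $NP$-complete by \cite{Yannakakis}. Hence it suffices to classify the $claw$-free graphs on five vertices, and I would enumerate these up to isomorphism.

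For the hard side, I would group together all $claw$-free $H$ that contain one of the small obstructions for which $NP$-completeness is already known. Combining Property \ref{HinH'NPC} with the hardness of $(claw,diamond,K_4,C_4)$-free perfect graphs, with the hardness of $(claw,C_5)$-free graphs (claw-free perfect graphs, \cite{Yannakakis}), and with Lemma \ref{butterfly}, any $claw$-free $H$ that has an induced $C_4$, $K_4$, $diamond$, $butterfly$, or is $C_5$ itself yields an $NP$-complete problem. I would then check that each of the fifteen listed graphs $C_5,K_5,K_5-e,\overline{P_3+2K_1},W_4,\overline{claw+K_1},\overline{P_2+P_3},gem,\overline{K_3+2K_1},K_4+K_1,C_4+K_1,dart,house,diamond+K_1,butterfly$ indeed exhibits such an induced obstruction, which places it on the hard side.

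For the easy side, I would first dispose of the graphs covered by the blanket polynomial results: by Property \ref{HinH'POLY} the problem is polynomial whenever $H\subseteq_i net$ or $H\subseteq_i P_7$, and by Property \ref{kK1} whenever $H=5K_1$; this settles $bull,P_5,P_3+2K_1,2K_2+K_1,K_2+3K_1,K_2+P_3,P_4+K_1,5K_1$. The four remaining $claw$-free graphs $paw+K_1$, $\{2,0,0\}$-triangle, $K_3+2K_1$, and $K_3+K_2$ escape these general results, and are handled directly by Properties \ref{K32K1} and \ref{K3K2copan}: in each the idea is that once a fixed small configuration (a $K_3$, respectively a $P_8$) fails to dominate $V$, forbidding $H$ forces the undominated vertices into a structure so constrained that $\gamma(G)$ is bounded by a constant and can be found in polynomial time via Property \ref{V=N[T]}.

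The main obstacle I anticipate is not any individual reduction, since each is already at hand, but the bookkeeping needed to make this a genuine dichotomy. I would have to verify that the fifteen hard graphs and the twelve easy graphs together form exactly the list of all $claw$-free graphs on five vertices, and that the two classification criteria are mutually exclusive, i.e. that no $H$ with $H\subseteq_i net$ or $H\subseteq_i P_7$ can contain an induced $C_4$, $K_4$, $diamond$, or $butterfly$. In particular I must confirm that the four residual graphs truly fall outside the scope of Properties \ref{HinH'POLY} and \ref{kK1}, so that Properties \ref{K32K1} and \ref{K3K2copan} are actually required. Completing this enumeration and disjointness check is what converts the assembled hardness and tractability facts into the stated dichotomy.
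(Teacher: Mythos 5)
Your proposal follows essentially the same route as the paper: the hardness side is obtained by combining Property \ref{HinH'NPC} with the known $NP$-completeness for $(claw,C_5)$-free, $(claw,diamond,K_4,C_4)$-free and $(claw,butterfly)$-free graphs (Lemma \ref{butterfly}), the easy side uses the $net$, $P_7$ and $k.K_1$ blanket results via Properties \ref{HinH'POLY} and \ref{kK1}, and the same four residual graphs $paw+K_1$, $\{2,0,0\}$-triangle, $K_3+2K_1$, $K_3+K_2$ are dispatched by Propositions \ref{K32K1} and \ref{K3K2copan}. The remaining work you identify (enumerating the claw-free five-vertex graphs and checking each listed $H$ against the obstructions) is exactly the bookkeeping the paper performs implicitly, so the proposal is correct and matches the paper's argument.
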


\subsection{$(claw,H)-free$ graphs when $H$ has six vertices}
We consider $H$ with exactly six vertices. From Theorems \ref{c4ck}, \ref{kdoubletriangle}, the MDS problem is $NP$-complete when $G$ is $(claw, C_4, \cdots, C_k)$-free (for any fixed $k\ge 4$) and when $G$ is $(claw, k-double-triangle)$-free (for any fixed $k\ge 0$). Hence, the MDS problem is $NP$-complete for $(claw, C_6)$-free graphs and for $(claw, double-triangle)$-free graphs. Furthermore, the MDS problem is $NP$-complete for $(claw, H)$-free graphs when $H$ contains at least one of the following graphs: $diamond, butterfly, claw, K_4$.

Also from lemma \ref{HinH'POLY}, the MDS problem is $polynomial$ for $(claw, H)$-free graphs when $H\subseteq_i H'$ and
the MDS problem is $polynomial$ for $(claw, H')$-free graphs. Hence, if $H$ is a net or is an induced subgraph of $P_8$, then the MDS problem is $polynomial$ for $(claw, H)$-free graphs.

So, we focus on $(claw, H)$-free graphs where the complexity of the MDS problem cannot be deduced from the previous arguments.

\begin{prop}\label{H6Poly}
Let $G=(V, E)$ a connected $(claw, H)$-free graph. Computing a minimum dominating set for $G$ is polynomial if $H\in\{K_3+P_3, \{3,0,0\}-triangle, \{2,0,0\}-triangle+K_1, 2K_3, P_3+3K_1, 2K_2+2K_1, paw+2K_1,bull+K_1, K_3+K_2+K_1, paw+K_2, \{2,1,0\}-triangle, K_2+4K_1, K_3+3K_1\}$.
\end{prop}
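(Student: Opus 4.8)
The plan is to treat the thirteen graphs by one common scheme and to isolate the two genuinely hard cases at the end. For each $H$ I want a \emph{core} $F\subseteq_i H$ for which $(claw,F)$-free \textsc{MDS} is already known polynomial; if $F\not\subseteq_i G$ we are immediately done, and otherwise I fix an induced copy of $F$ and study $W=V\setminus N[F]$, which is anticomplete to $F$ by definition.

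First I would dispose of the eight graphs of the form $H=F+rK_1$ with $r\ge 1$: namely $K_3+3K_1$ ($F=K_3$), $P_3+3K_1$ ($F=P_3$), $2K_2+2K_1$ ($F=2K_2$), $paw+2K_1$ ($F=paw$), $K_2+4K_1$ ($F=K_2$), $bull+K_1$ ($F=bull$), $\{2,0,0\}-triangle+K_1$ ($F=\{2,0,0\}-triangle$), and $K_3+K_2+K_1$ ($F=K_3+K_2$). In each case $(claw,F)$-free \textsc{MDS} is polynomial: for $K_3,P_3,K_2$ this is elementary (a path/cycle, a clique, an edgeless graph), for $2K_2$ it is recalled in the preliminaries, for $paw,bull$ it follows from $paw,bull\subseteq_i net$, the polynomiality of $(claw,net)$-free \textsc{MDS} \cite{clawnet} and Property \ref{HinH'POLY}, and for $\{2,0,0\}-triangle$ and $K_3+K_2$ it is Proposition \ref{K3K2copan}. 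Assuming $F\subseteq_i G$ and fixing such a copy, if $G[W]$ had $r$ pairwise non-adjacent vertices they would be anticomplete to $F$ and give an induced $F+rK_1=H$, a contradiction; hence $\alpha(G[W])\le r-1$, so by claw-freeness $\gamma(G[W])=i(G[W])\le\alpha(G[W])\le r-1$. Taking $F$ together with a $\gamma$-set of $G[W]$ dominates $G$, so $\gamma(G)\le|F|+(r-1)=5$ (as $|F|+r=6$), and a $\gamma$-set is found in $O(n^{5})$.

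Next I would handle the three graphs $\{3,0,0\}-triangle$, $\{2,1,0\}-triangle$ and $paw+K_2$ by the $P_8$-core method of Proposition \ref{K3K2copan}. If $G$ is $P_8$-free it is polynomial by \cite{clawpk}; otherwise I fix an induced $P=v_1-\cdots-v_8$ and, using Property \ref{V=N[T]}, assume $N[P]\ne V$. Connectivity and claw-freeness then supply, exactly as in the proof of Proposition \ref{K3K2copan}, a vertex $v\in N(P)$ adjacent to some $w\in W$ and having exactly two (necessarily consecutive) neighbours $v_i,v_{i+1}$ on $P$, with $i\le 4$ by symmetry. The useful point is that $\{v,v_i,v_{i+1}\}$ is a triangle while $w$ is a pendant attached only to $v$ and anticomplete to all of $P$. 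Combining this triangle with $w$ and with suitable path vertices (all at distance $\ge 2$ from $v_i,v_{i+1}$, hence non-adjacent to the triangle) builds each graph: $\{v,v_i,v_{i+1},v_{i+2},v_{i+3},v_{i+4}\}$ is a $\{3,0,0\}-triangle$ (a $P_3$ arm on $v_{i+1}$), $\{v,v_i,v_{i+1},v_{i+2},v_{i+3},w\}$ is a $\{2,1,0\}-triangle$ (a $P_2$ arm $v_{i+2}-v_{i+3}$ on $v_{i+1}$ and a $P_1$ arm $w$ on $v$), and $\{v,v_i,v_{i+1},w,v_{i+3},v_{i+4}\}$ is a $paw+K_2$. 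Since $i\le 4$ all indices stay $\le 8$, so each construction contradicts $H$-freeness; hence $N[P]=V$ and $\gamma(G)\le 8$.

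The genuinely hard cases are $K_3+P_3$ and $2K_3$, and here no constant bound on $\gamma$ is available: a triangle carrying many pendant vertices (each forced by claw-freeness to be privately dominated) is $(claw,K_3+P_3)$-free with unbounded $\gamma$, and a triangle with a long pendant path is $(claw,2K_3)$-free with unbounded $\gamma$; moreover the $P_8$-core argument breaks down exactly when the attachment sits in the middle of $P$ (at $i=4$ one finds only $v_1-v_2$ and $v_7-v_8$, a $2K_2$, not a disjoint $P_3$). I would instead use the $K_3$-core: if $G$ is $K_3$-free it is a path or cycle and we are done, so fix a triangle $T$. Now $H$-freeness forces every induced $P_3$ (respectively every induced triangle) to meet $N[T]$, so $G[W]$ with $W=V\setminus N[T]$ is a disjoint union of cliques (respectively of paths and cycles). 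The remaining and main difficulty is to turn this into a polynomial computation of $\gamma(G)$: I would argue, using claw-freeness, that each residual component attaches to $N(T)$ through a clique, so that the decisions dominating distinct residual components essentially decouple; each component is then dominated trivially (cliques) or by a path/cycle dynamic program, and this is combined with a bounded enumeration of the behaviour of a $\gamma$-set on the triangle region, peeling pendant structures via Proposition \ref{leaf} where needed. Making this decoupling precise, and in particular controlling how the residual components can interact through $N(T)$, is the step I expect to require the most care.
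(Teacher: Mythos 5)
Your treatment of eleven of the thirteen graphs is correct. The uniform handling of the eight graphs of the form $F+rK_1$ (observing that $\alpha(G[V\setminus N[F]])\le r-1$, hence $\gamma(G)\le |F|+r-1=5$) is a clean generalization of the paper's own Proposition \ref{K32K1}, and is arguably tidier than the paper's route, which instead exhibits an explicit induced copy of $H$ around a maximal induced $P_8$ for every one of the thirteen graphs. Your constructions for the $\{3,0,0\}$-triangle, the $\{2,1,0\}$-triangle and $paw+K_2$ coincide with the paper's (and in fact repair two apparent typos in its vertex lists).

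The proof is nevertheless incomplete: for $H=K_3+P_3$ and $H=2K_3$ you only sketch a decomposition and explicitly leave open the step of recombining the components of $G[V\setminus N[T]]$ across $N(T)$, which is exactly where the work would lie. Moreover, your diagnosis that the path-based argument ``breaks down'' for these two graphs is mistaken: the paper completes them with the same $P_8$ machinery plus one ingredient you did not use, namely Property \ref{leaf}. After the leaf reduction, the endpoints $v_1,v_k$ of the maximal induced path have neighbours $u_1,u_k$ off the path, and maximality of $P$ together with claw-freeness forces $\{u_1,v_1,v_2\}$ and $\{u_k,v_{k-1},v_k\}$ to be triangles whose attachments to $P$ are confined to the extremities of the path. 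These end-triangles resolve precisely the middle-attachment difficulty you identified at $i=4$: for $K_3+P_3$ one takes $\{u_1,v_1,v_2\}$ together with $v_5-v_6-v_7$ (the vertex $v$ is not needed at all), and for $2K_3$ one takes $\{v,v_i,v_{i+1}\}$ with $i\le 4$ together with $\{u_k,v_{k-1},v_k\}$, after checking $vu_k\notin E$ via a claw. So no new decomposition is required; you are missing the end-of-path triangles. (A small side remark: your example of unbounded $\gamma$ for $(claw,K_3+P_3)$-free graphs, a triangle carrying many pendant vertices, is not claw-free; a long induced path already makes that point.)
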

\begin{proof}
Let $P=v_1-\cdots-v_k$ a maximum induced path of $G$. Since computing a minimum dominating set is polynomial for $(claw,P_8)$-free graphs \cite{clawpk}, we can assume that $k\ge8$. Since $G$ is $claw$-free, any vertex $s\in N(P)$ is such that $\vert N_P(s) \vert \leq 4$. Also, from Property \ref{V=N[T]}, we can assume that $N[P] \neq V$, so $W=V-N[P]\ne\emptyset$. Let $w \in W$ with a neighbor $v \in V$ such that $N_P(v) \neq \emptyset$. Since $G$ is claw-free $v$ has exactly two neighbors that are consecutive in $P$, that is $N_P(v)=\{v_i, v_{i+1}\}$, $1 \leq i \leq k-1$. Since $P$ is a maximal path, these two neighbors cannot be $v_1,v_2$ or $v_{k-1},v_k$. Hence $N_P(v)=\{v_i, v_{i+1}\}$, $2 \leq i \leq k-2$. \\

From Property \ref{leaf} we can assume that both $v_1$ and $v_k$ are not leaves. Let $u_1, u_k \in V \setminus P$ be two neighbors of $v_1$ and $v_k$ respectively. We show how the vertices $u_1$ and $u_k$ are connected to $P$. First, we deal with $u_1=u_k$. From above $u_1$ has no neighbor in $W$. If $N_P(u_1)\supset\{v_1,v_2,v_3,v_k\}$ or $N_P(u_1)\supseteq\{v_1,v_{k-2},v_{k-1},v_k\}$ then $G$ has a claw. Hence, when $\vert N_P(u_1)\vert =4$, $N_P(u_1)=\{v_1,v_2,v_{k-1},v_k\}$ and when $\vert N_P(u_1)\vert =3$, $N_P(u_1)=\{v_1,v_2,v_k\}$ or $N_P(u_1)=\{v_1,v_{k-1},v_k\}$. Last, if $\vert N_p(u_1)\vert=2$, i.e. $N_p(u_1)=\{v_1,v_k\}$ then $w-v-v_i-\cdots v_1-u_1-v_k-\cdots-v_{i+1}=P_{k+2}$, a contradiction.
Hence there is no vertex $u\in V$ such that $N_P(u_1)=\{v_1, v_k\}$. Now when $u_1\ne u_k$ we have $N_P(u_1)=\{v_1,v_2\}$ or $N_P(u_1)=\{v_1,v_2,v_3\}$ or $N_P(u_1)=\{v_1,v_2,v_3,v_4\}$, and $N_P(u_k)=\{v_{k-1}, v_k\}$ or $N_P(u_k)=\{v_{k-2},v_{k-1},v_k\}$ or $N_P(u_k)=\{v_{k-3},v_{k-2},v_{k-1},v_{k}\}$. Hence, up to symmetry, in any case $G[\{v_1,v_2,u_1\}]$ is a triangle, and, $u_1$ and $u_k$ have no neighbor in $W$ \\

In the following we take $k=8$, the arguments being the same for $k>8$. Thus $N_P(v)=\{v_i, v_{i+1}\}$, $2 \leq i \leq 6$.

We demonstrate that $H\subseteq_i G$ when $P$ is of length $8$ thus for $k=8$. In the following, $i$ is the indice of the first neighbor of the vertex $v$ in $P$.

\begin{itemize}
    \item $H=K_3+P_3$: $G[\{u_1,v_1,v_2,v_5,v_6,v_7\}]=H$;
    \item $H=\{3,0,0\}-triangle$: when $2\leq i\leq 4$ then $G[\{v, v_{i}, v_{i+1}, v_{i+2}, v_{i+3}, v_{i+4}\}]=H$ (the case $5 \leq i \leq 6$ is symmetric);
    \item $H= \{2,0,0\}-triangle+K_1$: when $i\in \{2,3\}$ then $G[\{v, v_i, v_{i+1}, v_{i+2}, v_{i+3}, v_8\}]=H$ ($i \in \{5, 6\}$ is symmetric). When $i=4$ then $G[\{v, v_i, v_{i+1}, v_{i+2}, v_{i+3}, v_1\}]=H$;
    \item $2K_3$: when $2\leq i\leq 4$ then $u_8v\not\in E$ else $G$ contains a claw. Thus $G[\{v,v_i,v_{i+1}, v_{k-1}, v_k, u_k\}]=H$ (the case $i \in \{5, 6\}$ is symmetric);
    \item $H=P_3+3K_1$: $G[\{v_1, v_2, v_3, v_5, v_7, w\}]=H$;
    \item $H=2K_2+2K_1$: $G[\{v_1,v_2, v_4, v_5, v_7, w\}]=H$;
    \item $H=paw+2K_1$: when $i\in \{2,3\}$ then $G[\{v, v_i, v_{i+1}, w, v_6, v_8\}]=H$ (the case $i \in \{5, 6\}$ is symmetric). When $i=4$ then $G[\{v, v_i, v_{i+1},v_{i-1}, v_1, v_8\}]=H$;
    \item $H=bull+K_1$: when $2\leq i\leq 4$ then $G[\{v, v_i, v_{i+1}, v_{i-1},w,v_8\}]=H$ (the case $i\in \{5, 6\})$ is symmetric);
    \item $H=K_3+K_2+K_1$: $G[\{u_1, v_1, v_2, v_5, v_6, w\}]=H$;
    \item $H=paw+K_2$: when $2\leq i\leq 4$ then $G[\{v,v_i,v_{i+1},w,v_8\}]=H$ (the case $i\in \{5, 6\}$ is symmetric);
    \item $H=\{2,1,0\}-triangle$: when $2\leq i\leq 4$ then $G[\{v,v_{i-1},v_i,v_{i+1},v_{i+2},v_{i+3},w\}]=H$ (the case $i\in \{5, 6\}$ is symmetric);
    \item $H=K_2+4K_1$: $G[\{v_1, v_2, v_4, v_6, v_8, w\}]=H$;
    \item $H=K_3+3K_1$: $G[\{u_1, v_1, v_2, v_4, v_6, w\}]=H$.
\end{itemize}
\end{proof}
\subsection{The dichotomy  for the $H$'s with no more than six vertices}
Taking together all the results proved in this section we can conclude with the following result.
\begin{theorem}\label{H6catmost}
Let $G$ be a connected $(claw,H)$-free graph such that $H$ has at most six vertices. When $H'\subseteq_i H$ with $H'\in \{C_4,C_5,C_6,K_4,diamond,butterfly, double-triangle\}$, computing a minimum dominating set for $G$ is $NP$-complete, otherwise
 it is polynomial.
\end{theorem}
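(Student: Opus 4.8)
The plan is to read this statement as a summary of the per-size analyses already carried out, so that the proof is a bookkeeping argument organized around the two implications of the dichotomy. Throughout I would use the standing reduction, recorded just before Property \ref{HinH'NPC}, that if $H$ itself contains a claw then the $(claw,H)$-free graphs are exactly the claw-free graphs, for which MDS is already $NP$-complete; hence I may assume $H$ is claw-free with between three and six vertices.

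For the hardness direction I would first observe that each of the seven minimal forbidden graphs $C_4,C_5,C_6,K_4$, the diamond, the butterfly and the double-triangle is a graph for which $(claw,\cdot)$-free MDS is already known to be $NP$-complete: $C_4$, $K_4$ and the diamond by Property \ref{4vert}; $C_5$ by the claw-free-and-perfect hardness invoked in Theorem \ref{H5complexity}; $C_6$ by Theorem \ref{c4ck}; the butterfly by Lemma \ref{butterfly}; and the double-triangle by Theorem \ref{kdoubletriangle} with $k=0$. Then, whenever $H'\subseteq_i H$ for one of these seven graphs $H'$, Property \ref{HinH'NPC} immediately lifts the hardness to $(claw,H)$-free graphs, settling every $H$ that the theorem declares $NP$-complete.

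For the polynomial direction I would argue by the number of vertices of $H$, assuming now that $H$ contains none of the seven obstructions. When $H$ has three vertices, Property \ref{3vert} gives the result unconditionally. When $H$ has four vertices the only claw-free graphs containing an obstruction are $C_4,K_4$ and the diamond, and Property \ref{4vert} declares exactly the remaining four-vertex graphs polynomial. When $H$ has five vertices I would invoke Theorem \ref{H5complexity}, and when $H$ has six vertices I would combine the polynomial cases of Property \ref{H6Poly} with the facts that $H\subseteq_i net$ and $H\subseteq_i P_8$ both yield polynomiality (Property \ref{HinH'POLY}) and that $(claw,k.K_1)$-free MDS is polynomial (Property \ref{kK1}). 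The substance of this direction is to check that these polynomial results collectively cover every claw-free $H$ on at most six vertices that avoids all seven obstructions.

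The main obstacle is therefore not a new reduction or algorithm but the finite verification that the seven-obstruction criterion coincides, graph by graph, with the union of the per-size classifications. Concretely I would confirm two things in the five- and six-vertex cases: that every graph in the $NP$-complete list of Theorem \ref{H5complexity} (and in the six-vertex hardness discussion) contains one of the seven obstructions as an induced subgraph, and conversely that every claw-free six-vertex graph avoiding all seven obstructions is either an induced subgraph of $net$ or of $P_8$, is some $k.K_1$, or appears explicitly in Property \ref{H6Poly}. This is a bounded but genuinely tedious enumeration of the claw-free graphs on five and six vertices; once it is checked against the obstruction set, the two implications above combine to give the stated dichotomy.
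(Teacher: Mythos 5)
Your proposal is correct and matches the paper's approach: the paper proves this theorem simply by ``taking together all the results proved in this section,'' i.e.\ exactly the bookkeeping you describe, lifting hardness from the seven obstructions via Property \ref{HinH'NPC} and covering the remaining claw-free $H$'s by the per-size polynomial classifications (Properties \ref{3vert}, \ref{4vert}, Theorem \ref{H5complexity}, Property \ref{H6Poly} together with the $net$, $P_8$ and $k.K_1$ cases). The finite cross-check you flag as the real substance is indeed what the paper's one-line proof implicitly relies on.
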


\section{More results for $(claw,H)-free$ graphs}\label{twotriangles}
In the previous section we give a complexity dichotomy when $H$ has no more than six vertices. Here we give a partial result for the $H$'s with more than six vertices.
\begin{theorem}
Let $k>0$ be a fixed integer and $H$ be a fixed graph with $k$ vertices such that $H$ has a connected component that contains two distinct triangles. The Minimum Dominating Set problem is $NP$-complete for $(claw, H)$-free graphs.
\end{theorem}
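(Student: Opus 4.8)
The plan is to exhibit inside the fixed graph $H$ an induced copy of one of the ``hard'' graphs already handled in this paper, and then transfer $NP$-completeness upward. Concretely, I aim to show that $H$ contains an induced subgraph $H'$ lying in $\{C_4,C_5,C_6,K_4,diamond,butterfly\}$ or in the family of $j$-double-triangles ($j\ge 0$). Since $H'\subseteq_i H$, every $(claw,H')$-free graph is $(claw,H)$-free, so the class of $(claw,H')$-free graphs is contained in the class of $(claw,H)$-free graphs; hence, by the same reasoning as in Property \ref{HinH'NPC} applied with the claw added to the forbidden list, the $NP$-completeness of the MDS problem on $(claw,H')$-free graphs — which holds by Property \ref{4vert} (for $K_4$ and $diamond$), Theorem \ref{c4ck} (for $C_4,C_5,C_6$), Lemma \ref{butterfly} (for the $butterfly$) and Theorem \ref{kdoubletriangle} (for the double-triangles) — carries over to $(claw,H)$-free graphs. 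As a preliminary reduction, if $H$ itself contains an induced claw then $(claw,H)$-free coincides with claw-free and we are done by \cite{Yannakakis}; so I may assume $H$ is claw-free. Let $C$ be a connected component of $H$ containing two distinct triangles $T_1$ and $T_2$.

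First I would treat the case where $T_1$ and $T_2$ share at least one vertex. If they share an edge $xy$, with respective third vertices $z_1$ and $z_2$, then $\{x,y,z_1,z_2\}$ induces a $K_4$ (if $z_1z_2\in E$) or a $diamond$ (otherwise). If they share exactly one vertex $v$, write $T_1=\{v,a_1,b_1\}$ and $T_2=\{v,a_2,b_2\}$ and examine the induced subgraph on $\{v,a_1,b_1,a_2,b_2\}$, which carries the two triangle edges plus some subset of the four ``cross'' pairs between $\{a_1,b_1\}$ and $\{a_2,b_2\}$. A short enumeration on these cross edges shows that this five-vertex graph always contains an induced $butterfly$ (no cross edge), an induced $diamond$ (exactly one cross edge, e.g. $\{v,a_1,a_2,b_2\}$ when $a_1a_2\in E$), an induced $C_4$ (two cross edges forming a matching), or an induced $K_4$ (two cross edges sharing a vertex, or at least three cross edges); in every subcase a suitable $H'$ appears.

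The main work is the vertex-disjoint case. Here I would pick a shortest path $P=p_0p_1\cdots p_m$ between $V(T_1)$ and $V(T_2)$, with $p_0\in V(T_1)$, $p_m\in V(T_2)$ and no interior vertex in $V(T_1)\cup V(T_2)$; being shortest, $P$ is induced, and minimality forbids every triangle--path adjacency except those of $p_0$ inside $T_1$, of $p_m$ inside $T_2$, the path edges, and possibly an edge from $p_1$ to $T_1$ or from $p_{m-1}$ to $T_2$. The delicate point is to kill these last parasitic edges. The key observation is that if a path vertex is adjacent to two vertices of a triangle, then those two vertices, the third vertex of that triangle and the path vertex induce a $diamond$ or a $K_4$; this disposes of any double attachment of $p_1$ or $p_{m-1}$. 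When $m=1$ the triangles are joined directly and I must additionally control extra edges between $T_1$ and $T_2$: a second cross edge incident to $p_0$ or $p_1$ gives a $diamond$/$K_4$ as above, while a cross edge disjoint from $p_0p_1$ gives an induced $C_4$ (or a $diamond$/$K_4$ if it is chorded). Once all such edges are excluded, $V(T_1)\cup\{p_1,\dots,p_{m-1}\}\cup V(T_2)$ induces an $(m-1)$-double-triangle, a fixed graph since $H$ is fixed, and Theorem \ref{kdoubletriangle} applies.

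Taking the three cases together, $H$ always contains one of the target graphs $H'$ as an induced subgraph, which finishes the reduction; membership in $NP$ being immediate, the problem is $NP$-complete. I expect the vertex-disjoint case to be the genuine obstacle: the argument must simultaneously use the minimality of $P$ to annihilate adjacencies between interior path vertices and the triangles, and a careful edge-by-edge check to verify that every parasitic edge either produces one of the small forbidden graphs ($C_4$, $diamond$, $K_4$) or leaves behind a clean induced double-triangle. Organising this bookkeeping so that no configuration is overlooked — in particular the short cases $m\in\{1,2\}$, where $p_1$ and $p_{m-1}$ can reach both triangles — is the part that needs the most care.
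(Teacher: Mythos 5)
Your proposal is correct and follows essentially the same route as the paper: a case analysis on how the two triangles intersect or are joined, extracting one of $C_4$, $K_4$, $diamond$, $butterfly$ or a $k'$-double-triangle as an induced subgraph of $H$, and then invoking Property \ref{HinH'NPC} together with the earlier hardness results. Your treatment of the vertex-disjoint case via a shortest connecting path is in fact more careful than the paper's one-line assertion, since it accounts for the parasitic triangle--path adjacencies that may replace the clean double-triangle by a $diamond$ or $K_4$.
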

\begin{proof}
Let $T_1\subseteq_i H, T_2\subseteq_i H$ be two distinct triangles in a same component of $H$. When $T_1$ and $T_2$ are not vertex disjoint then $C_4\subseteq_i H$ or $diamond\subseteq_i H$ or $butterfly\subseteq_i H$. Now let $T_1$ and $T_2$ be vertex disjoint. When there is an edge from $T_1$ to $T_2$ then $double-triangle \subseteq_i H$ or $C_4\subseteq_i H$ or $K_4\subseteq_i H$ or $diamond\subseteq_i H$. If there is no edge between $T_1$ and $T_2$ then there exists $k'<k$ such that $k-double-triangle \subseteq_i H$. From our previous results we know that when $H\in \{C_4,K_4,diamond,butterfly, k'-double-triangle\}$ the MDS problem is $NP$-complete for $(claw, H)$-free graphs.
Thus the result follows from Property \ref{HinH'NPC}.
\end{proof}

\section{Conclusion}
We gave some complexity results for the Minimum Dominating Set problem for the class of $claw$-free graphs when another fixed graph $H$ is forbidden as an induced subgraph.  Especially we gave a complexity dichotomy for the class of $(claw,H)$-free graphs when $H$ has less than seven vertices. When $H$ has at least  seven vertices, we gave a partial result for the cases where $H$ contains a connected component with two triangles. For the other $H$'s we left the complexity problem open.

To reach our goal, as an intermediary step,  we proved the $NP$-completeness of the Minimum Dominating Set problem for the class of cubic graphs, which was unknown.

\end{document}